\setlist[enumerate,1]{label={(\roman*)}} %
\theoremstyle{plain}
\newtheorem{thm}{Theorem}[section]
\newaliascnt{propCt}{thm}
\newtheorem{prop}[propCt]{Proposition}
\newaliascnt{lemCt}{thm}
\newtheorem{lem}[lemCt]{Lemma}
\newaliascnt{corCt}{thm}
\newtheorem{cor}[corCt]{Corollary}
\newaliascnt{extCt}{thm}
\newaliascnt{algoCt}{thm}
\newaliascnt{notaCt}{thm}
\newaliascnt{conjCt}{thm}
\newtheorem{introthm}{Theorem}
\newaliascnt{introcorCt}{introthm}
\newtheorem{introcor}[introcorCt]{Corollary}
\newaliascnt{introdefCt}{introthm}
\theoremstyle{definition}
\newaliascnt{defnCt}{thm}
\newtheorem{defn}[defnCt]{Definition}
\newaliascnt{convCt}{thm}
\newtheorem{conv}[convCt]{Convention}
\newaliascnt{remCt}{thm}
\newtheorem{rem}[remCt]{Remark}
\newaliascnt{egCt}{thm}
\newtheorem{eg}[egCt]{Example}
\newaliascnt{assuCt}{thm}
		\newcommand{\C}{\mathbb C}
		\DeclareMathOperator{\alg}{alg}
		\DeclareMathOperator{\im}{im}
        \newcommand{\cspan}{\overline{\mathrm{span}}~}
\begin{document}
\title[Exactness and Fell bundles over inverse semigroups]{Exactness and Fell bundles with the approximation property over inverse semigroups}
\author[Changyuan Gao]{Changyuan Gao}
\address[Changyuan Gao]{Chern Institute of Mathematics and LPMC, Nankai University, Tianjin 300071, China.}
	\email{changyuangao@mail.nankai.edu.cn}

\author[Julian Kranz]{Julian Kranz}
\address[Julian Kranz]{Universit\"at M\"unster, Mathematisches Institut, Einsteinstr. 62, 48149 M\"unster, Germany}
\email{julian.kranz@uni-muenster.de}
\urladdr{https://sites.google.com/view/juliankranz/}

\keywords{exactness; the approximation property; Fell bundles; inverse semigroups; groupoids.}

\subjclass{Primary: 46L05, 46L55; 37A15; 37A25.}

\date{\today}

\begin{abstract}
We prove that the reduced cross-sectional algebra of a Fell bundle with the approximation property over an inverse semigroup is exact if and only if the unit fiber of the Fell bundle is exact. This generalizes a recent result of the first-named author for actions of second countable locally compact Hausdorff groupoids on separable $C^*$-algebras. Along the way, we reprove some results of Kwa\'sniewski--Meyer on Fell bundle ideals. 
\end{abstract}

\maketitle

\section{Introduction}
The theory of exact $C^*$-algebras, initiated and largely developed by Eberhard Kirchberg in the 1990s, has become a cornerstone of modern operator algebras. A $C^*$-algebra $A$ is exact, if the minimal tensor product functor $A\otimes -$ preserves short exact sequences of $C^*$-algebras. 
Kirchberg's seminal work shows that exactness is equivalent to admitting a faithful representation with the completely positive approximation property \cite{Kirchberg1995a} and in the separable case to admitting an embedding into the Cuntz algebra $\mathcal O_2$ \cite{Kirchberg1995,Kirchberg2000}. This provides a deep link between algebraic, analytic and structural properties of $C^*$-algebras.

The related notion of nuclearity has an even longer history, dating back to the work of Takesaki \cite{Takesaki1964}.
Nuclearity for $C^*$-algebra $A$ is defined in terms of uniqueness of $C^*$-norms on the algebraic tensor product $A\odot B$ with any other $C^*$-algebra $B$ and has a remarkable analytic characterization in terms of the completely positive approximation property for the identity map on $A$ \cite{Choi1978,Kirchberg1977}.

Both nuclearity and exactness play a crucial role in the classification program, the study of crossed product $C^*$-algebras, and noncommutative geometry. 
In the setting of group $C^*$-algebras, these properties have dynamical characterizations in terms of the underlying group by means of amenability and uniform embeddability into Hilbert space. 
These dynamical conditions have important consequences including the Baum--Connes and Novikov conjectures \cite{Higson2001,Guentner2002}.

As a consequence, a lot of research in operator algebras has been devoted to finding necessary and sufficient conditions for nuclearity and exactness of $C^*$-algebras associated to more and more general classes of dynamical systems \cite{AnantharamanDelaroche1987,Exel1997,Exel2002,AnantharamanDelaroche2000,Takeishi2014,Lal-15,Abadie2022,Buss2022,Ozawa2021,BM-23,Kwasniewski2023,Kra-23,Gao-25,Buss2025}. 
The arguably most general framework for $C^*$-dynamical systems is that of Fell bundles over inverse semigroups which simultaneously generalizes groups, inverse semigroups, (twisted) \'etale groupoids, and actions of all of these on $C^*$-algebras \cite{FD-88,Kum-98,Sieben1998,Exe-11,BE-12,BM-17}.

A (saturated\footnote{Since we only consider saturated Fell bundles in this work, we simply call them \emph{Fell bundles}.}) Fell bundle over an inverse semigroup $S$ consists of a collection of Hilbert bimodules $(A_s)_{s\in S}$ equipped with a multiplication and involution that mimic the inverse semigroup operations. When $S$ is the bisection semigroup of an \'etale groupoid $G$, Fell bundles over $S$ generalize saturated Fell bundles over $G$ (see \cite[Theorem 6.1]{BM-17} or \cite[Theorem 7.5]{Kwasniewski2021}).%

On the dynamical side, nuclearity for cross-sectional algebras of Fell bundles is mirrored by Exel's \emph{approximation property} \cite{Exel1997,Exel2002,Kra-23,BM-23} which is an amenability-type condition generalizing amenability of the underlying group(oid). 
The approximation property together with nuclearity of the unit fiber often characterizes nuclearity of the cross-sectional algebra, notably for actions of discrete groups on $C^*$-algebras \cite{AnantharamanDelaroche1987}, Fell bundles over discrete groups \cite{Abadie2022,Buss2022}, and actions of second countable Hausdorff \'etale groupoids on separable $C^*$-algebras \cite{Kra-23}. In many other cases of interest, the former conditions are at least sufficient for nuclearity of the cross-sectional algebra, in particular for actions of locally compact groups on $C^*$-algebras \cite{Buss2022}, for separable Fell bundles over second countable Hausdorff \'etale groupoids \cite{Kra-23}, and for Fell bundles over inverse semigroups \cite{BM-23}. 

The main advantage of Buss--Mart\'inez' approach \cite{BM-23} to nuclearity using inverse semigroups rather than \'etale groupoids as in \cite{Kra-23} is that their purely algebraic arguments circumvent the measure-theoretic complications caused by Renault's disintegration theory \cite{Renault1987,Muhly2008}. This allows them to drop separability and Hausdorff assumptions and to achieve strictly greater generality. 

While it is not yet fully understood how exactness of cross-sectional algebras can be characterized dynamically (see \cite{AnantharamanDelaroche2021} for what is known in the groupoid case), the situation becomes accessible in the presence of the aproximation property: Under this assumption, exactness of the cross-sectional algebra is often equivalent to exactness of the unit fiber. This has been established for actions of amenable groups \cite{Kirchberg1994}, for actions of amenable groupoids \cite{Lal-15}, for group actions on $C^*$-algebras \cite{Bedos2015,McKee2021,Buss2022}, for actions of second countable Hausdorff \'etale groupoids on separable $C^*$-algebras \cite{Gao-25} and for actions of inverse semigroups on $C^*$-algebras by partial $*$-isomorphisms \cite{Gao}.

In this paper, we prove an analogous result in the generality of Fell bundles over inverse semigroups:
\begin{introthm}
Let $\mathcal A$ be a Fell bundle over a unital inverse semigroup $S$ with the approximation property. Then the reduced cross-sectional algebra $C^*_r(\mathcal A)$ is exact if and only if its unit fiber $A_1$ is exact.
\end{introthm}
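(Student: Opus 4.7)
\smallskip

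The forward implication is routine: the unit fiber $A_1$ sits as a $C^*$-subalgebra inside $C^*_r(\mathcal A)$ (using the natural conditional expectation onto the unit fiber), and exactness passes to $C^*$-subalgebras. All the real work is in proving the converse, for which the plan follows the general template used for group and groupoid actions: translate a short exact sequence $0\to I\to B\to B/I\to 0$ of auxiliary $C^*$-algebras into an exact sequence of cross-sectional algebras via fiberwise tensor products, and then invoke the approximation property to identify these cross-sectional algebras with minimal tensor products by $C^*_r(\mathcal A)$.

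Concretely, given $C$ a $C^*$-algebra, I would construct the fiberwise tensor-product Fell bundle $\mathcal A\otimes C=(A_s\otimes C)_{s\in S}$ over $S$ and observe that the approximation property of $\mathcal A$ passes to $\mathcal A\otimes C$ by acting fiberwise with the approximating multipliers. The key technical step is then to prove the natural isomorphism
\[
C^*_r(\mathcal A\otimes C)\;\cong\;C^*_r(\mathcal A)\otimes_{\min} C,
\]
naturally in $C$. For actions of groupoids this was a central ingredient in Gao's earlier paper; here I would reprove it in the inverse-semigroup setting, avoiding disintegration theory entirely by following the algebraic philosophy of Buss--Mart\'{i}nez. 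One direction of this identification (``$\leq$'') comes from compatibility of the reduced norm with minimal tensor products; the other direction uses the approximation property to factor $\|\cdot\|_{\min}$ through $\|\cdot\|_r$ on the cross-sectional algebra of $\mathcal A\otimes C$.

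With this identification in hand, suppose $A_1$ is exact and let $0\to I\to B\to B/I\to 0$ be exact. The unit fibers of $\mathcal A\otimes I\subset\mathcal A\otimes B$ are precisely the ideal $A_1\otimes I$ inside $A_1\otimes B$ with quotient $A_1\otimes B/I$, and by exactness of $A_1$ this gives a short exact sequence at the level of unit fibers. The Kwa\'{s}niewski--Meyer correspondence between Fell bundle ideals and invariant ideals of the unit fiber, reproved earlier in the paper, then lifts this to a short exact sequence
\[
0\to C^*_r(\mathcal A\otimes I)\to C^*_r(\mathcal A\otimes B)\to C^*_r(\mathcal A\otimes B/I)\to 0
\]
of reduced cross-sectional algebras. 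Combining this with the naturality of the tensor-product identification above yields exactness of $0\to C^*_r(\mathcal A)\otimes I\to C^*_r(\mathcal A)\otimes B\to C^*_r(\mathcal A)\otimes B/I\to 0$, hence exactness of $C^*_r(\mathcal A)$.

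The main obstacle is the identification $C^*_r(\mathcal A\otimes C)\cong C^*_r(\mathcal A)\otimes_{\min}C$ in the absence of any Hausdorff, second-countable, or separability assumptions. Checking that the approximating positive-type multipliers behave well under fiberwise tensoring, and in particular implement an approximation at the level of the minimal tensor product norm rather than merely the maximal one, is the step I would expect to be most delicate. A secondary but nontrivial point is that the Kwa\'{s}niewski--Meyer ideal correspondence must be shown to respect reduced (not just full) cross-sectional algebras; here the approximation property is again essential, since it forces the reduced quotient to agree with the algebraic quotient coming from the Fell bundle ideal.
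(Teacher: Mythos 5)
Your proposal is correct and follows essentially the same route as the paper: form the fiberwise tensor bundle $\mathcal A\otimes D$, use Buss--Mart\'inez to pass the approximation property to it and to identify $C^*_r(\mathcal A\otimes D)\cong C^*_r(\mathcal A)\otimes D$, and deduce exactness of the resulting sequence of cross-sectional algebras from exactness of the unit fiber via the Kwa\'sniewski--Meyer ideal correspondence. The only packaging difference is that the paper gets exactness at the level of \emph{full} cross-sectional algebras (where it holds unconditionally, by the Kwa\'sniewski--Meyer result reproved as \autoref{exactnessofuniversal}, after the fiberwise isometry argument of \autoref{shortexactfell} reduces everything to exactness of $A_1$) and only then uses the approximation property to identify full with reduced, whereas you work with reduced algebras throughout; note also that the identification $C^*_r(\mathcal A\otimes C)\cong C^*_r(\mathcal A)\otimes_{\min}C$ holds for arbitrary $C$ without the approximation property (proof of \cite[Proposition~5.9]{BM-23}), the approximation property being needed only for full $=$ reduced.
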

As an application, we remove the Hausdorff and separability assumptions from \cite{Gao-25} in the same spirit as Buss--Mart\'inez \cite{BM-23} generalizes \cite{Kra-23} and prove the following corollary which generalizes \cite{Gao-25,Gao}:
\begin{introcor}
Let $G$ be an \'etale groupoid with Hausdorff unit space and $\mathcal A$ a saturated Fell bundle over $G$ with the approximation property. Then $C^*_r(\mathcal A)$ is exact if and only if $C_0(\mathcal A^{(0)})$ is exact.
\end{introcor}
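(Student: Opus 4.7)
The plan is to deduce Corollary B directly from Theorem A using the standard dictionary \cite{Exe-11,BE-12,BM-17} that identifies saturated Fell bundles over an \'etale groupoid $G$ with Fell bundles over the inverse semigroup $S := \Bis(G)$ of open bisections. Since $G$ is \'etale, $G^{(0)}$ is itself an open bisection and hence the unit of $S$, so $S$ is unital. To the saturated Fell bundle $\mathcal A$ over $G$ one associates the Fell bundle $\tilde{\mathcal A}$ over $S$ whose fiber at a bisection $U\in S$ is $\tilde{\mathcal A}_U := \Gamma_0(U,\mathcal A|_U)$; saturatedness of $\mathcal A$ supplies the multiplication and involution.

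Three compatibility points must then be checked to transport Theorem A across this dictionary: (a) the unit fiber of $\tilde{\mathcal A}$ is $\Gamma_0(G^{(0)},\mathcal A|_{G^{(0)}}) = C_0(\mathcal A^{(0)})$; (b) there is a canonical isomorphism $C^*_r(\mathcal A) \cong C^*_r(\tilde{\mathcal A})$ of reduced cross-sectional algebras; and (c) the approximation property of $\mathcal A$ as a Fell bundle over $G$ coincides with the approximation property of $\tilde{\mathcal A}$ as a Fell bundle over $S$. Granting (a)--(c), a direct application of Theorem A to $\tilde{\mathcal A}$ yields the corollary. Hausdorffness of $G^{(0)}$ is used precisely to ensure that $C_0(\mathcal A^{(0)})$ is a well-defined $C^*$-algebra on the groupoid side; the inverse-semigroup side imposes no such restriction, which is what makes the Hausdorff and separability hypotheses of \cite{Gao-25} superfluous.

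Of these three checks, (a) is immediate from the construction, and (b) follows from the standard identification of a common dense $*$-subalgebra of compactly supported sections together with a comparison of the reduced regular representations. The main obstacle I anticipate is (c): one must convert a net of compactly supported sections on $G$ realizing the AP in the groupoid sense into a net of finite sums indexed by bisections realizing the AP in the inverse-semigroup sense. In the Hausdorff, separable setting this comparison is essentially explicit in \cite{Kra-23, BM-23}; for the general case, the argument should proceed by partitioning the support of each approximating section into finitely many open bisections using partitions of unity on $G^{(0)}$. This last step is where I would expect the principal technical work to lie.
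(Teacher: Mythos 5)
Your proposal follows essentially the same route as the paper: pass to the Fell bundle $\mathcal B=(B_u)_{u\in S}$ over $S=\operatorname{Bis}(G)$ with unit fiber $B_1=C_0(\mathcal A^{(0)})$, use the canonical isomorphism $C^*_r(\mathcal A)\cong C^*_r(\mathcal B)$, transfer the approximation property, and apply the main theorem. The only difference is that your anticipated technical obstacle (c) needs no new argument: the paper simply invokes \cite[Theorem 4.16]{BM-23}, which already shows in full generality that the approximation property of $\mathcal A$ over $G$ passes to the associated Fell bundle over $\operatorname{Bis}(G)$, so your partition-of-unity sketch can be replaced by that citation.
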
 

Our proof uses several results from Kwa\'sniewsi--Meyer's work on the ideal structure of Fell bundles, see \cite{Kwasniewski2023}. Since their arguments are quite brief, we include detailed proofs for the convenience of the reader. We do however emphasize that the results in \autoref{sec-ideals} are either explicitly or implicitly contained in \cite[Section 4]{Kwasniewski2023}.

Throughout this paper, we use $\odot$ to denote the algebraic tensor products and $\otimes$ the minimal tensor products for $C^*$-algebras and Hilbert modules. We denote by $\operatorname{id}_X\colon X\rightarrow X$ the identity map for a set $X$ or by $\operatorname{id}$ if $X$ is understood. We write $A''$ for the enveloping von Neumann algebra of a $C^*$-algebra $A$.

\section{Preliminaries}
In this section, we recall the necessary notation and basic results concerning inverse semigroups, Fell bundles, cross-sectional $C^*$-algebras, and the approximation property.

An \emph{inverse semigroup} is a semigroup $S$ such that for each $s\in S$ there is a unique $s^*\in S$ satisfying $ss^*s=s$ and $s^*ss^*=s^*$. If there exists an element $1\in S$ such that $t1=1t=t$ for any $t\in S$, then we say that $1$ is a \emph{unit} of $S$ and that $S$ is \emph{unital}. 
Note that a unit is unique if it exists. 
\begin{conv}
In this paper, all inverse semigroups are assumed to be unital.
\end{conv}
We denote by $E(S)$ the set $\{e\in S\colon e^2=e\}$ of \emph{idempotents} of $S$. Note that for any $e,f\in E(S)$, we have $e=e^*=e^2$ and $ef=fe$. Any inverse semigroup $S$ is a partially ordered set, where $t\leq u$ if there exists $e\in E(S)$ such that $t=ue$. 
For more background on inverse semigroups, we refer the reader to \cite{Lawson-98}.

Let $A$ and $B$ be $C^*$-algebras. A \emph{Hilbert $A$-$B$-bimodule} is a left Hilbert $A$-module $H$ which is also a right Hilbert $B$-module such that the left $A$-action commutes with the right $B$-action and such that the inner products satisfy the compatibility condition 
\[_A\langle\xi,\eta\rangle\zeta=\xi\langle\eta,\zeta\rangle_B\]
for all $\xi,\eta,\zeta\in H$. 
We will abbreviate the term \emph{Hilbert $A$-$A$ bimodule} to \emph{Hilbert $A$-bimodule} or simply \emph{Hilbert bimodule} if $A$ is understood.

If $H$ is a Hilbert $A$-$B$-bimodule, we denote by $H^*$ the \emph{dual Hilbert $B$-$A$-bimodule} of $H$, 
given by the conjugate $\mathbb C$-vector space $H^*=\{\xi^*\colon \xi\in H\}$, equipped with the module structures 
$b \xi^* a\coloneqq (a^* \xi b^*)^*$ and the inner products $_B\langle{\xi}^*,{\eta}^*\rangle\coloneqq\langle\xi,\eta\rangle_B$ and $\langle{\xi}^*,{\eta}^*\rangle_A\coloneqq{_A}\langle\xi,\eta\rangle$ for all $a\in A,b\in B,\xi,\eta\in H$. 
If $H$ is a Hilbert $A$-$B$-bimodule and $K$ a Hilbert $B$-$C$-bimodule, we denote by $H\otimes_BK$ the \emph{balanced tensor product} which is defined as the separated completion (i.e. completion of the quotient by all elements with vanishing seminorm) of the algebraic tensor product $H\odot_BK$ with respect to the semi-inner product defined by 
\[\langle \xi_1\otimes \eta_1,\xi_2\otimes \eta_2\rangle_C\coloneqq \langle \eta_1,\langle \xi_1,\xi_2\rangle_B\eta_2\rangle_C\]
for all $\xi_1,\xi_2\in H,\eta_1,\eta_2\in K$. Note that $H\otimes_BK$ is a Hilbert $A$-$C$-bimodule with respect to the obvious $A$-$C$-bimodule structure and the left inner product defined by 
\[{}_{A}\langle  \xi_1\otimes \eta_1,\xi_2\otimes \eta_2\rangle \coloneqq {}_{A}\langle \xi_1 {}_{B}\langle\eta_1,\eta_2\rangle ,\xi_2\rangle\]
for all $\xi_1,\xi_2\in H,\eta_1,\eta_2\in K$.
We moreover equip any $C^*$-algebra $A$ with the Hilbert $A$-bimodule structure defined by left and right multiplication and the inner products 
	\[\langle a,b\rangle_A \coloneqq a^*b,\quad _{A}\langle a,b\rangle \coloneqq ab^*,\]
for all $a,b\in A$.
For more background on Hilbert (bi)modules, we refer the reader to \cite{Lance1995}. 

The definition of Fell bundles over inverse semigroups goes back to Exel \cite{Exe-11}. 
For this paper, we use the following equivalent definition for Fell bundles, which is referred to as an action of $S$ on a $C^*$-algebra by Hilbert bimodules in \cite{BM-17}.
\begin{defn}[{\cite[Definition~4.7]{BM-17}}]\label{defn-fell-bdl}
Let $S$ be an inverse semigroup with unit and let $A$ be a $C^*$-algebra. 
A \emph{Fell bundle over $S$ with unit fiber $A$} consists of a collection of Hilbert $A$-bimodules $\mathcal A=(A_s)_{s\in S}$ and Hilbert $A$-bimodule isomorphisms $\mu_{s,u}\colon A_s\otimes_AA_u\cong A_{su}$ for $s,u\in S$, such that
\begin{enumerate}
    \item $A_1=A$ as a Hilbert $A$-bimodule;
    \item $\mu_{s,1}\colon A_s\otimes_AA\cong A_s$ and $\mu_{1,s}\colon A\otimes_AA_s\cong A_s$ are the canonical isomorphisms defined by $\mu_{s,1}(\xi\otimes a)=\xi a$ and $\mu_{1,s}(a\otimes\xi)=a\xi$ for $s\in S$, $a\in A$ and $\xi\in A_s$;
	\item for all $s,t,r\in S$, the diagram
    \begin{displaymath}
    \xymatrix@R=1cm@C=2cm{
        A_s\otimes_AA_t\otimes_AA_r \ar[d]^{\operatorname{id}_{A_s}\otimes_A\mu_{t,r}} \ar[r]^-{\mu_{s,t}\otimes_A\operatorname{id}_{A_r}} & A_{st}\otimes_AA_r \ar[d]^-{\mu_{st,r}}\\
        A_s\otimes_AA_{tr}\ar[r]^-{\mu_{s,tr}} & A_{str}
        }
    \end{displaymath}
    commutes, and the common composition is denoted by
    \[\mu_{s,t,r}\colon A_s\otimes_AA_t\otimes_AA_r\rightarrow A_{str}.\]
\end{enumerate}
\end{defn}

\begin{thm}[{\cite[Theorem~4.8]{BM-17}}]\label{inc-star}
Let $\mathcal{A}=(A_s)_{s\in S}$ be a Fell bundle over a unital inverse semigroup $S$ with unit fiber $A$. 
Then there exist unique Hilbert $A$-bimodule embeddings $\iota_{s,t}\colon A_t\rightarrow A_s$ for $s,t\in S$ with $t\leq s$ such that
\begin{enumerate}
    \item for all $s_1,t_1,s_2,t_2\in S$ with $t_1\leq s_1$ and $t_2\leq s_2$, the diagram
    \begin{displaymath}
    \xymatrix@R=1cm@C=2cm{
        A_{t_1}\otimes_AA_{t_2} \ar[d]^{\iota_{s_1,t_1}\otimes \iota_{s_2,t_2}} \ar[r]^-{\mu_{t_1,t_2}} & A_{t_1t_2} \ar[d]^-{\iota_{s_1s_2,t_1t_2}}\\
        A_{s_1}\otimes_AA_{s_2}\ar[r]^-{\mu_{s_1,s_2}} & A_{s_1s_2}
        }
    \end{displaymath}
    commutes;
    \item $\iota_{s,s}=\operatorname{id}_{A_s}$ for every $s\in S$;
    \item $\iota_{r,s}\circ \iota_{s,t}=\iota_{r,t}$ for all $t\leq s\leq r$;
\end{enumerate}
and unique Hilbert bimodule isomorphisms $J_s\colon(A_s)^*\xrightarrow{\cong} A_{s^*}$ such that 
\[\mu_{s,s^*,s}(\xi\otimes J_s({\xi^*})\otimes \xi)=\xi\langle\xi,\xi\rangle_A={_A}\langle\xi,\xi\rangle\xi\]
for $\xi\in A_s$.
If no confusion arises, we use the notation
\begin{itemize}
  \item $A_t=\iota_{u,t}(A_t)\subset A_u$;
  \item $\xi_t^*\coloneqq J_t({\xi_t^*})$;
  \item $\xi_s \xi_t\coloneqq\mu_{s,t}(\xi_s\otimes \xi_t)$
\end{itemize}
for all $s,t,u\in S$ with $t\leq u, \xi_t\in A_t$, and $\xi_s\in A_s$.
We moreover have 
\begin{itemize}
	\item $(\xi_s \xi_t)^*=\xi_t^* \xi_s^*$ for all $s,t\in S, \xi_s\in A_s$ and $\xi_t\in A_t$;
	\item $\langle \xi_s,\eta_s\rangle_{A_1} = \xi_s^*  \eta_s$ for all $s\in S$ and $\xi_s,\eta_s\in A_s$.
\end{itemize}
\end{thm}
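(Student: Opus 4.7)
The plan is to construct, in order, the embeddings $\iota_{1,e}\colon A_e\hookrightarrow A$ for idempotents $e\in E(S)$, the general inclusions $\iota_{s,t}$ for $t\leq s$, and the involutions $J_s$, all of them driven by the associativity of $\mu$ and the idempotence $\mu_{e,e}\colon A_e\otimes_A A_e\cong A_e$.

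The first and hardest step is to show that for $e\in E(S)$ the fiber $A_e$ is canonically isomorphic to the closed two-sided ideal $I_e\coloneqq\overline{{}_A\langle A_e,A_e\rangle}\subseteq A$. Uniqueness forces $\iota_{1,e}$ to be a $*$-homomorphism: axiom~(1) applied with $s_1=s_2=1$ and $t_1=t_2=e$ gives $\iota_{1,e}(\mu_{e,e}(\xi\otimes\eta))=\iota_{1,e}(\xi)\iota_{1,e}(\eta)$. For existence I would promote the left regular representation $L\colon A_e\to\End_A(A_e)$, $L_\xi(\eta)\coloneqq\mu_{e,e}(\xi\otimes\eta)$, to a Hilbert $A$-bimodule isomorphism $A_e\cong I_e$. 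The main technical obstacle lies precisely here, in canonically identifying a self-absorbing Hilbert $A$-bimodule with an ideal and checking that the identification respects all coherence data of the Fell bundle.

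Granting Step~1, for $t\leq s$ the element $e\coloneqq s^*t$ is idempotent (since $s^*t=(s^*s)f$ is a product of commuting idempotents whenever $t=sf$) and satisfies $se=t$, so I would define
\[
\iota_{s,t}\;\coloneqq\;\mu_{s,1}\circ(\mathrm{id}_{A_s}\otimes\iota_{1,e})\circ\mu_{s,e}^{-1}\colon A_t\to A_s.
\]
The associativity diagram in \autoref{defn-fell-bdl} then reduces axioms (1)--(3) to associativity of $\mu$ and functoriality of $\otimes_A$, with independence of the choice of factorisation $t=se$ following from the canonicity of $e=s^*t$.

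For $J_s$, the composition $A_{s^*}\otimes_A A_s\xrightarrow{\mu_{s^*,s}}A_{s^*s}\xrightarrow{\iota_{1,s^*s}}A$ from Step~1 yields a natural $A$-balanced pairing $B_s\colon A_{s^*}\times A_s\to A$. I would define $J_s$ as the unique Hilbert bimodule isomorphism $A_s^*\to A_{s^*}$ for which $B_s(J_s(\xi^*),\eta)=\langle\xi,\eta\rangle_A$, verify well-definedness via a symmetric argument using $\mu_{s,s^*}$ and $\iota_{1,ss^*}$, and derive the identity $\mu_{s,s^*,s}(\xi\otimes J_s(\xi^*)\otimes\xi)=\xi\langle\xi,\xi\rangle_A$ by associativity of $\mu$. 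The remaining formulas $(\xi_s\xi_t)^*=\xi_t^*\xi_s^*$ and $\langle\xi_s,\eta_s\rangle_A=\xi_s^*\eta_s$ then drop out by unwinding definitions.
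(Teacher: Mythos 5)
First, a point of comparison: the paper does not prove \autoref{inc-star} at all --- it is imported verbatim from \cite[Theorem~4.8]{BM-17}, so the only thing your proposal can be measured against is the Buss--Meyer argument. Your overall strategy (reduce everything to the idempotent fibers, identify $A_e$ for $e\in E(S)$ with an ideal of $A$, then define $\iota_{s,t}$ via the factorization $t=s(s^*t)$ and obtain $J_s$ from the pairing $A_{s^*}\otimes_A A_s\to A_{s^*s}\subset A$) is indeed the standard route, so the skeleton is sound.

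However, as a proof the proposal has a genuine gap, and it is exactly the one you flag yourself: Step~1 is asserted, not proved. Writing $L_\xi(\eta)=\mu_{e,e}(\xi\otimes\eta)$ gives an adjointable operator on the right Hilbert $A$-module $A_e$ (since $L_\xi=\mu_{e,e}\circ T_\xi$ with $T_\xi$ adjointable and $\mu_{e,e}$ unitary), but that alone does not place $L_\xi$ in the image of $I_e=\overline{{}_A\langle A_e,A_e\rangle}$ acting by left multiplication: you would still have to show that $L_\xi$ is implemented by an element of $A$ (e.g.\ by identifying it with a compact operator and using $\mathcal K_A(A_e)\cong I_e$, which requires an argument --- $L_\alpha L_\beta^*$ is of the form ``rank one in the first leg tensor identity'' and is not obviously compact), that the resulting map $A_e\to I_e$ is isometric, surjective, a bimodule map, intertwines $\mu_{e,e}$ with multiplication in $A$, and is compatible with the inclusions $A_e\subset A_f$ for $e\leq f$ and with the maps $\mu_{s,e}$ for general $s$. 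This last coherence is also where your Step~2 is thinner than advertised: verifying axiom~(1) of \autoref{inc-star} for arbitrary $t_1\leq s_1$, $t_2\leq s_2$ with your definition $\iota_{s,t}=\mu_{s,1}\circ(\mathrm{id}\otimes\iota_{1,s^*t})\circ\mu_{s,e}^{-1}$ is not just ``associativity plus functoriality of $\otimes_A$''; one has to move the idempotent fiber past $A_{s_2}$ (identities of the type $A_{e}A_{s_2}=A_{s_2}A_{s_2^*es_2}$), which itself rests on the unproven Step~1. Finally, uniqueness is only addressed for $\iota_{1,e}$ (and there only as a necessary multiplicativity condition); uniqueness of $\iota_{s,t}$ for general $t\leq s$ and of $J_s$ subject to the cubic identity $\mu_{s,s^*,s}(\xi\otimes J_s(\xi^*)\otimes\xi)=\xi\langle\xi,\xi\rangle_A$ (a polarization-type argument) is not given, and the existence of $J_s$ from your pairing $B_s$ requires showing that $\eta\mapsto\langle\xi,\eta\rangle_A$ is represented by a unique element of $A_{s^*}$, i.e.\ that the pairing is perfect --- again only sketched. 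So the proposal is a reasonable plan aligned with \cite{BM-17}, but its central step and the uniqueness claims remain to be carried out.
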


\begin{rem}\label{rem-saturated}
	The Fell bundles appearing in \autoref{defn-fell-bdl} are also referred to as \emph{saturated} Fell bundles in the literature. 
	Our main results also hold for more general notions of Fell bundles as long as an analogue of \autoref{inc-star} holds true. 
	There is a definition of possibly non-saturated Fell bundles where the multiplication maps $\mu_{s,u}$ are only assumed to be embeddings rather than isomorphisms and instead, the maps $\mu_{s,s^*,s}$ are assumed to be isomorphisms, see \cite[Definition 2.4]{BM-23}. 
	Contrary to what is stated in \cite[Theorem~2.8]{BM-23}, this definition does not imply an analogue of \autoref{inc-star}. 
	A counterexample is given by the inverse semigroup $S=E(S)=\{e,f,1\}$ with $e<f<1$ and the Fell bundle $\mathcal A=(A_s)_{s\in S}$ given by $A_e=A_1=\C$ and $A_f=0$. 
	As kindly pointed out to us by Alcides Buss, their definition can be fixed so that an analogue of \autoref{inc-star} does indeed hold. This will appear in future work. 
	For this article however, we restrict our attention to saturated Fell bundles and simply call them \emph{Fell bundles}. 
\end{rem}
Let $\mathcal{A}=(A_s)_{s\in S}$ be a Fell bundle over a unital inverse semigroup $S$ with unit fiber $A$. 
In order to define the full and reduced cross-sectional algebras of $\mathcal A$, we need the collection of ideals $(I_{s,t})_{s,t\in S}$ in $A$ introduced in \cite{BEM-17}. 
For an element $v\in S$, note that $v^*v\in E(S)$ is an idempotent satisfying $v^*v\leq 1$. In particular, we can identify $A_{v^*v}$ with an ideal of $A=A_1$ using the inclusions $\iota_{v^*v,1}$ from \autoref{inc-star}.
Given $s,t\in S$, let $I_{s,t}\subset A$ be the closed two-sided ideal generated by $A_{v^*v}\subset A$ for all $v\leq s,t$. Then we have $I_{1,e}=A_e$ for any $e\in E(S)$ and for every $s,t,v\in S$ with $v\leq s,t$, there exists a unique Hilbert $A$-bimodule isomorphism
\begin{equation}\label{eq-theta-def}
	\theta_{s,t}\colon A_t I_{s,t}\xrightarrow{\cong} A_s I_{s,t}
\end{equation}
extending the identity map $A_t A_{v^*v}=A_v\rightarrow A_v=A_s A_{v^*v}$.%

We recall the construction of the full cross-sectional $C^*$-algebra $C^*(\mathcal A)$ of $\mathcal A$ below. For more details, see \cite{BEM-17}, \cite{BM-23} and \cite{Exe-11}.
We denote by $C_c(\mathcal{A})\coloneqq C_c(S,\mathcal{A})$ the space of finitely supported sections $S$ to $\mathcal{A}$, that is the algebraic direct sum
$\oplus_{s\in S}A_s$. We write $a_s\delta_s\in C_c(\mathcal{A})$ as the section that takes the value $a_s\in A_s$ at $s$ and $0$ everywhere else. Notice that $C_c(\mathcal{A})$ is a $*$-algebra with respect to the multiplication and the involution given by
\begin{equation}
(\xi_s\delta_s) (\xi_t\delta_t)\coloneqq(\xi_s\xi_t)\delta_{st}\qquad\text{ and } \qquad (\xi_s\delta_s)^*\coloneqq\xi_s^*\delta_{s^*}.\notag
\end{equation}
\begin{defn}[\cite{BEM-17}]\label{defn-full-alg}
The \emph{full cross-sectional $C^*$-algebra} $C^*(\mathcal{A})$ is the enveloping $C^*$-algebra of the quotient $*$-algebra $\mathbb{C}_{\alg}(\mathcal{A})\coloneqq C_c(\mathcal{A})/\mathcal{J}_{\mathcal{A}}$, where $\mathcal{J}_{\mathcal{A}}\subset C_c(\mathcal A)$ is the subspace (which is also a $*$-ideal) generated by $\theta_{t,u}(a)\delta_t-a\delta_u$ for $t,u\in S$ and $a\in A_u I_{t,u}$.
\end{defn}

As remarked in \cite[Remark~4.7]{BEM-17}, the maps $A_s\rightarrow C^*(\mathcal{A}),\quad\xi_s\mapsto \xi_s\delta_s+\mathcal{J}_{\mathcal{A}}$ for $s\in S$ are isometric and thus injective. In particular, $A_1\rightarrow C^*(\mathcal{A})$ is an injective $*$-homomorphism. 
Since this does not cause any ambiguity, we will avoid notational clutter and abbreviate the elements $\xi_s\delta_s+\mathcal J_A\in C^*(\mathcal A)$ as $\xi_s\delta_s$ throughout the paper.
\begin{defn}[{\cite[Definition~2.7]{BEM-17}}]
Let $\mathcal{A}=(A_s)_{s\in S}$ be a Fell bundle over $S$ and $H$ a Hilbert space. A \emph{representation} $\pi\colon \mathcal A\to \mathcal B(H)$ is a family $\pi=(\pi_s)_{s\in S}$ of linear maps $\pi_s\colon A_s\rightarrow \mathcal B(H)$ such that
\begin{equation}
\pi_s(\xi_s)\pi_t(\xi_t)=\pi_{st}(\xi_s\xi_t),\quad \pi_s(\xi_s)^*=\pi_{s^*}(\xi_s^*)\quad\text{and}\quad\pi_s(\xi_s)=\pi_u(\xi_s),\notag
\end{equation}
whenever $s,t,u\in S$ with $s\leq u$, $\xi_s\in A_s$ and $\xi_t\in A_t$.
We say that a representation $\pi=(\pi_s)_{s\in S}$ is \emph{non-degenerate} if $\pi_1$ is a non-degenerate representation of $A_1$.
\end{defn}

The following universal property of the full cross-sectional $C^*$-algebra is frequently utilized in this paper.
\begin{prop}[{\cite[Proposition~2.9]{BEM-17}}]\label{univ-property}
Let $\mathcal{A}=(A_s)_{s\in S}$ be a Fell bundle over $S$. Then $C^*(\mathcal{A})$ is universal for representations of $\mathcal{A}$ in the sense that for any Hilbert space $H$, there is a bijection 
\[\{ \pi=(\pi_s)_{s\in S}\colon \mathcal A\to \mathcal B(H)\}\xrightarrow{\cong}\{\tilde \pi\colon C^*(\mathcal A)\to \mathcal B(H) \},\quad \pi \mapsto \tilde \pi\]
between the representations of $\mathcal A$ on $H$ and the representations of $C^*(\mathcal A)$ on $H$ given by $\pi_s(a_s)=\tilde\pi(a_s\delta_s)$ for all $s\in S$ and $a_s\in A_s$. Moreover, $\pi$ is non-degenerate if and only if $\tilde \pi$ is non-degenerate.
\end{prop}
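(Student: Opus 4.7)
The plan is to exhibit the bijection concretely in both directions. Given a representation $\pi=(\pi_s)_{s\in S}$ of $\mathcal A$ on $H$, I would first define a linear map $\pi_c\colon C_c(\mathcal A)\to \mathcal B(H)$ by $\pi_c(\xi_s\delta_s)\coloneqq \pi_s(\xi_s)$. The defining identities of a representation immediately give that $\pi_c$ is a $*$-homomorphism of $*$-algebras. Moreover, since $\pi_1$ is a $*$-homomorphism of $C^*$-algebras it is contractive, whence for arbitrary $s\in S$ and $\xi_s\in A_s$ we have
\[\|\pi_s(\xi_s)\|^2=\|\pi_{s^*s}(\xi_s^*\xi_s)\|=\|\pi_1(\iota_{1,s^*s}(\xi_s^*\xi_s))\|\leq \|\xi_s\|^2,\]
using $s^*s\leq 1$ and the compatibility $\pi_{s^*s}=\pi_1\circ\iota_{1,s^*s}$ forced by the representation property. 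Hence each $\pi_s$ is contractive.

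The crux is to show that $\pi_c$ descends to the quotient $\mathbb C_{\alg}(\mathcal A)=C_c(\mathcal A)/\mathcal J_{\mathcal A}$, i.e., that $\pi_t(\theta_{t,u}(a))=\pi_u(a)$ for every $t,u\in S$ and $a\in A_u I_{t,u}$. By the construction of $\theta_{t,u}$ in \eqref{eq-theta-def}, the subspace of $A_u I_{t,u}$ spanned by products $\xi b$ with $v\leq t,u$, $\xi\in A_v$ and $b\in A_{v^*v}$ is dense, and on it $\theta_{t,u}$ acts as the identity via the two inclusions $A_v\subset A_t$ and $A_v\subset A_u$. The relation $\pi_u\circ\iota_{u,v}=\pi_v=\pi_t\circ\iota_{t,v}$ built into the notion of a representation then yields the desired equality on this dense subspace, and contractivity of $\pi_s$ extends it to all of $A_u I_{t,u}$. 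The induced $*$-homomorphism $\mathbb C_{\alg}(\mathcal A)\to \mathcal B(H)$ extends uniquely to $\tilde\pi\colon C^*(\mathcal A)\to \mathcal B(H)$ by the universal property of the enveloping $C^*$-algebra.

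For the inverse direction, set $\pi_s(\xi_s)\coloneqq\tilde\pi(\xi_s\delta_s)$; this is well-defined because $A_s\hookrightarrow C^*(\mathcal A)$ isometrically by the remark following \autoref{defn-full-alg}, and the representation axioms follow directly from the corresponding identities in $\mathbb C_{\alg}(\mathcal A)$, the compatibility $\pi_s=\pi_u\circ\iota_{u,s}$ being exactly what is enforced by $\mathcal J_{\mathcal A}$. That the two assignments are mutually inverse is manifest from the formula $\tilde\pi(\xi_s\delta_s)=\pi_s(\xi_s)$. For non-degeneracy, taking an approximate unit $(e_\lambda)$ of $A_1$ and using the non-degeneracy of the $A_1$-action on each $A_s$ gives $(e_\lambda\delta_1)(\xi_s\delta_s)=(e_\lambda\xi_s)\delta_s\to\xi_s\delta_s$ in $C^*(\mathcal A)$; thus $A_1\subset C^*(\mathcal A)$ acts non-degenerately, so $\tilde\pi$ is non-degenerate if and only if $\pi_1=\tilde\pi|_{A_1}$ is. The main obstacle I expect is the verification that $\pi_c$ factors through $\mathcal J_{\mathcal A}$: this requires a density-plus-continuity argument combining the algebraic compatibility of $\pi$ on fibers with the defining property of $\theta_{t,u}$ on the dense spanning set of $A_u I_{t,u}$.
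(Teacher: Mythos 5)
Your proof is correct. Note that the paper itself offers no argument for \autoref{univ-property} — it is quoted from \cite{BEM-17} — and your route (turn $\pi$ into a $*$-representation of $C_c(\mathcal A)$, get fiberwise contractivity from the $C^*$-identity and $\pi_{s^*s}=\pi_1\circ\iota_{1,s^*s}$, kill the generators of $\mathcal J_{\mathcal A}$ by checking $\pi_t\circ\theta_{t,u}=\pi_u$ on the dense spanning set $\operatorname{span}\{A_v\colon v\leq t,u\}\subset A_uI_{t,u}$ and extending by continuity, then pass to the enveloping $C^*$-algebra, with the converse by restriction and non-degeneracy handled by an approximate unit of $A_1$) is essentially the standard proof of the cited result.
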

\begin{rem}\label{euqal-norm}
It follows from \autoref{univ-property} that for any $x\in C^*(\mathcal{A})$, we have
\[\|x\|_{C^*(\mathcal{A})}=\sup\{\|\tilde \pi(x)\|\colon \pi \text{ is any (non-degenerate) representation of } \mathcal{A}\}.\]
\end{rem}

We briefly recall the definition of the reduced cross-sectional algebra $C^*_r(\mathcal A)$ of $\mathcal A$. 
For the proofs and more details, we refer the reader to \cite{BEM-17} and \cite{BM-23}. 
We denote by $\mathcal A''=(A_s'')_{s\in S}$ the \emph{von Neumann enveloping Fell bundle} of $\mathcal A$ whose fibers are given by the Banach space biduals of the fibers of $\mathcal A$ and whose structure maps are ultraweakly continuous extensions of the structure maps of $\mathcal A$. 
There is a faithful weak conditional expectation $P\colon \C_{\mathrm{alg}}(\mathcal A'')\to A_1''$ satisfying $P(a_s\delta_s)= \theta_{1,s}''(a_s 1_s)$ where $1_s$ is the unit of $I_{1,s}''$ and $\theta_{1,s}''\colon A_s'' I_{1,s}''\to A_1''  I_{1,s}''\subset A_1''$ is the ultraweakly continuous extension of $\theta_{1,s}$ from \eqref{eq-theta-def}. 
We warn the reader the $P(\C_{\mathrm{alg}}(\mathcal A))$ is \emph{not} necessarily contained in $A_1\subset A_1''$. This failure is closely related to the possible non-Hausdorffness of the groupoid $\hat A\rtimes S$ associated to $\mathcal A$, see \cite[Remark 3.16]{Kwasniewski2023}.
We denote by $\ell^2(\mathcal A'')$ the completion of $\C_{\mathrm{alg}}(\mathcal A'')$ with respect to the $A_1''$-valued inner product $\langle \xi,\eta \rangle = P(\xi^*\eta)$. 
The \emph{reduced cross-sectional algebra} $C^*_r(\mathcal A)$ is defined as the closure of the image of the \emph{left-regular representation}
\[\Lambda\colon \C_{\textrm{alg}}(\mathcal A)\to \mathcal L(\ell^2(\mathcal A'')),\quad \Lambda(a)\xi\coloneqq a \xi\]
where we let $\C_{\textrm{alg}}(\mathcal A)\subset \C_{\textrm{alg}}(\mathcal A'')$ act on $\ell^2(\mathcal A'')$ by left multiplication. 

We end this section by recalling the definition of the approximation property from \cite{BM-23}. 
Let $\mathcal{A}=(A_s)_{s\in S}$ be a Fell bundle with unit fiber $A=A_1$. We denote by $\Gamma_c(S,\mathcal{A})$ the set of functions $\xi\colon S\rightarrow A_1$ such that $\xi(s)\in A_{ss^*}$ for every $s\in S$. 
As above, we denote by $I_{s,t}\subset A_1$ the closed two-sided ideal generated by $A_{v^*v}\subset A_1$ with $v\leq s,t$ and denote by $1_s$ the unit of $I_{s,1}''\subset A_1''$.
\begin{defn}[{\cite[Definition 4.4]{BM-23}}]
A Fell bundle $\mathcal{A}=(A_s)_{s\in S}$ over a unital inverse semigroup $S$ has the \emph{approximation property} if there is a net $(\xi_i)_{i\in I}$ in $\Gamma_c(S,\mathcal{A})$ such that
\begin{enumerate}
\item $\sup_{i\in I}||\sum_{p,t\in S}\xi_i(p)^*\xi_i(t)1_{pt^*}||_{A_1''}<\infty$;
\item for every $s\in S$ and $a_s\in A_s$, we have
   \begin{equation}
       \lim_i\left\|\sum_{p,t\in S}1_{p(st)^*}\xi_i(p)^*a_s\xi_i(t)- a_s\right\|_{A_s''}=0. \notag
   \end{equation}
\end{enumerate}
\end{defn}
The only features of the approximation property that we need in this article are the fact that it implies isomorphism of the full and reduced cross-sectional algebras \cite[Theorem 6.2]{BM-23} and the fact that it passes to tensor products \cite[Proposition 5.15]{BM-23}. 

\section{Ideals in Fell bundles}\label{sec-ideals}
Throughout this section, we fix a unital inverse semigroup $S$. 
We introduce the notion of short exact sequences of Fell bundles over $S$ and obtain a corresponding short exact sequence of full $C^*$-algebras. Based on this, we prove the main result. We acknowledge that the notion of Fell bundle ideal defined in this paper is equivalent to the notion of invariant ideal introduced by Kwa\'sniewski and Meyer in \cite{Kwasniewski2023}. This equivalence implies that most of the results (\autoref{quotient-bund-1}, \autoref{quotient-bund-2}, \autoref{lemextend} and \autoref{exactnessofuniversal}) presented here are, in fact, already contained in \cite[Section 4]{Kwasniewski2023}. Nevertheless, we have retained the complete proofs of these results, particularly to provide details that were omitted or only sketched in the original proof.

\begin{defn}\label{idealdef}
	Let $\mathcal A=(A_s)_{s\in S}$ be a Fell bundle over $S$ with unit fiber $A=A_1$. An \emph{ideal} $\mathcal I\subset \mathcal A$ consists of a collection of Hilbert $A$-subbimodules
$(I_s\subset A_s)_{s\in S}$ such that
\begin{enumerate}
  \item\label{Item-ideal-1} $I_s A_t + A_s  I_t\subset I_{st}$ for all $s,t\in S$;		
  \item\label{Item-ideal-2} $J_s(I_s^*)=I_{s^*}$ for all $s\in S$;
  \item\label{Item-ideal-3} $\iota_{t,s}(I_s)\subset I_t$ for all $s,t\in S$ with $s\leq t$.
\end{enumerate}
Here, $J_s$ and $\iota_{t,s}$ are as in \autoref{inc-star}.
\end{defn}
\begin{rem}
    Suppose that $\mathcal I=(I_s)_{s\in S}$ is an ideal of a Fell bundle $\mathcal A=(A_s)_{s\in S}$.
    \begin{enumerate}
      \item By \autoref{idealdef}~\ref{Item-ideal-1}, $I_e$ is a closed two-sided ideal in $A_e$ for every $e\in E(S)$;
	  \item Since we have $\langle \xi,\eta\rangle_{A_1}=\mu_{s^*,s}(\xi^*\otimes \eta)\in I_{s^*s}\subset I_1$ for $s\in S$ and $\xi,\eta\in I_s$, and since $I_1\cdot I_s\cdot I_1\subset I_s$, it follows that $I_s$ is a Hilbert $I_1$-bimodule with respect to the inherited operations from $A_s$.
 \item For every $s,t\in S$, the map $\mu_{\mathcal{I}_{s,t}}\colon I_s\odot I_t\rightarrow A_{st},\quad \xi_s\otimes \xi_t\mapsto \mu_{s,t}(\xi_s\otimes \xi_t)$ induces a well-defined map $\mu_{\mathcal{I}_{s,t}}\colon I_s\otimes_{I_1} I_t\rightarrow I_{st}$. Moreover, it preserves module actions and inner products. Surjectivity of $\mu_{\mathcal{I}_{s,t}}$ follows from the Cohen--Hewitt factorization theorem, surjectivity of $\mu_{s,t}\colon A_s\otimes_A A_t\to A_{st}$ and the ideal property for $\mathcal{I}$ which together imply
          \begin{align*}
            I_{st}&=\cspan I_1 I_{st} I_1\subset \cspan I_1 A_{st} I_1=\cspan I_1 A_s A_t I_1 \\
            &\subset \cspan I_s I_t\subset I_{st}.
          \end{align*}
       Hence, $\mu_{\mathcal{I}_{s,t}}$ is a Hilbert bimodule isomorphism for every $s,t\in S$. 
       \item It follows from the above, that $\mathcal I=(I_s)_{s\in S}$ is a Fell bundle over $S$ with the unit fiber $I=I_1$ and multiplication $(\mu_{\mathcal{I}_{s,t}})_{s,t\in S}$. 
    \end{enumerate}
\end{rem}

\begin{defn}[{\cite[Definition 4.8]{Kwasniewski2023}}]
	Let $\mathcal A=(A_s)_{s\in S}$ be a Fell bundle over an inverse semigroup $S$ with unit fiber $A$. 
	A closed two-sided ideal $I\subset A$ is called \emph{$\mathcal A$-invariant} if we have $I  A_s=A_s   I$ for all $s\in S$. 
\end{defn}
The following result shows that Fell bundle ideals are equivalent to invariant ideals. We refer to \cite[Proposition~6.19]{Kwasniewski2020} for further characterizations of invariant ideals. 
\begin{prop}
	Let $\mathcal A=(A_s)_{s\in S}$ be a Fell bundle over an inverse semigroup $S$ with unit fiber $A=A_1$.
	Then there is a one-to-one correspondence between ideals $\mathcal I=(I_s)_{s\in S}$ of $\mathcal A$ and $\mathcal A$-invariant ideals $I$ of $A$ given by $\mathcal I\mapsto I_1$ and $I\mapsto \mathcal A_I\coloneqq (I A_s)_{s\in S}$. 
\end{prop}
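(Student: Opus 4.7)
The plan is to verify that both assignments are well defined and mutually inverse. The map $I\mapsto \mathcal A_I$ requires only routine checks; the subtler direction is to show that the unit fiber of a Fell bundle ideal is $\mathcal A$-invariant and that $\mathcal A_{I_1}$ recovers $\mathcal I$.

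I would start with a Fell bundle ideal $\mathcal I=(I_s)_{s\in S}$ and upgrade each fiber $I_s$ from a closed $A$-sub-bimodule of $A_s$ to a Hilbert $I_1$-sub-bimodule. For $\xi,\eta\in I_s$ one has $\langle\xi,\eta\rangle_{A_1}=\xi^*\eta$; by axiom~\eqref{Item-ideal-2} this is a product of an element of $I_{s^*}$ with an element of $A_s$, which by axiom~\eqref{Item-ideal-1} lies in $I_{s^*s}$, and finally axiom~\eqref{Item-ideal-3} places it inside $I_1\subset A_1$. A symmetric argument treats the left inner product. Since Hilbert bimodules are nondegenerate on both sides, this forces $I_s=\overline{I_1 I_s}=\overline{I_s I_1}$. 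Combined with the inclusions $I_1 A_s\subset I_s$ and $A_s I_1\subset I_s$ supplied by axiom~\eqref{Item-ideal-1}, I conclude
\[\overline{I_1 A_s}=I_s=\overline{A_s I_1}\qquad\text{for every } s\in S.\]
In particular $I_1$ is $\mathcal A$-invariant, and this identity simultaneously shows that $\mathcal A_{I_1}=\mathcal I$.

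For the reverse assignment, given an $\mathcal A$-invariant ideal $I\subset A$ I would verify that $\mathcal A_I=(\overline{I A_s})_{s\in S}$ satisfies \autoref{idealdef}: axiom~\eqref{Item-ideal-1} follows from $\overline{I A_s}\cdot A_t\subset \overline{I A_{st}}$ together with $A_s\cdot \overline{I A_t}=\overline{I A_s A_t}\subset \overline{I A_{st}}$ after applying invariance in the middle step; axiom~\eqref{Item-ideal-2} from $J_s(\overline{I A_s}^*)=\overline{A_{s^*} I}=\overline{I A_{s^*}}$, again by invariance; and axiom~\eqref{Item-ideal-3} is immediate since each $\iota_{t,s}$ is an $A$-bimodule map with $\iota_{t,s}(A_s)\subset A_t$. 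Cohen factorization in $I$ gives $\overline{I\cdot A}=I$, so the round trip $I\mapsto\mathcal A_I\mapsto (\mathcal A_I)_1$ is also the identity.

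The main obstacle I anticipate is the identity $\overline{I_1 A_s}=I_s$ in the first round trip: it requires using all three axioms of \autoref{idealdef} in concert in order to endow $I_s$ with the extra structure of a Hilbert $I_1$-bimodule, before one can invoke nondegeneracy to turn $A$-sub-bimodule information into $I_1$-sub-bimodule information.
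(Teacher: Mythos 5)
Your proposal is correct and follows essentially the same route as the paper: using the ideal axioms to see that the inner products of $I_s$ take values in $I_1$, invoking nondegeneracy of Hilbert modules to get $I_s=\overline{I_sI_1}=\overline{I_1I_s}$ and hence $\overline{I_1A_s}=I_s=\overline{A_sI_1}$ (which gives invariance of $I_1$ and recovers $\mathcal I$ from $I_1$), and then verifying the three ideal axioms for $(\overline{IA_s})_{s\in S}$ exactly as the paper does. The only cosmetic difference is that the paper isolates the Hilbert $I_1$-bimodule structure of $I_s$ in a preceding remark, while you rederive it inside the proof.
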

\begin{proof}
	Let $\mathcal I= (I_s)_{s\in S}$ be an ideal in $\mathcal A$. 
	Then we have $I_s = I_s  I_1 \subset A_s  I_1 \subset I_s$ and similarly $I_s= I_1 A_s $ so that $I_1$ is invariant. 
	Moreover, each $I_s$ only depends on $I_1$, so that $\mathcal I\mapsto I_1$ is injective. 
	To see that $\mathcal I\mapsto I_1$ is surjective, let $I\subset A$ be an $\mathcal A$-invariant ideal and define $I_s\coloneqq I  A_s$ for all $s\in S$. We need to show that $\mathcal I= (I  A_s)_{s\in S}$ is an ideal in $\mathcal A$. 
	Since $I$ is an ideal in $A$, it is clear that $I A_s\subset A_s$ is a Hilbert submodule. 
	Moreover, conditions \eqref{Item-ideal-1} - \eqref{Item-ideal-3} of \autoref{idealdef} are readily checked:
	\begin{enumerate}
		\item For $s,t \in S$, we have $(I  A_s) A_t = I  (A_s A_t)\subset I A_{st}$ and $A_s  (I A_t)= A_s  (A_t I) = (A_s A_t) I \subset A_{st} I =I A_{st}$;
		\item For $s\in S$, we have $(I  A_s)^*= A_s^*  I^* = A_{s^*} I = I A_{s^*}$;
		\item For $s,t \in S$ with $s\leq t$, we have $I A_s \subset I A_t$. 
	\end{enumerate}
This finishes the proof.
\end{proof}

Given an ideal $\mathcal{I}$ of a Fell bundle $\mathcal{A}$ over $S$, we next intend to construct the \emph{quotient Fell bundle} $\mathcal A/\mathcal I$. The next two results are remarked after \cite[Lemma 4.9]{Kwasniewski2023} and detailed proofs are omitted there. Here, we provide a complete proof based on the notion of Fell bundle ideals.
\begin{prop}\label{quotient-bund-1}
Let $\mathcal A=(A_s)_{s\in S}$ be a Fell bundle over an inverse semigroup $S$ with unit fiber $A= A_1$ and $\mathcal I\subset \mathcal A$ be an ideal with $I=I_1$. Then the quotient $A_s/I_s$ is a Hilbert $A/I$-bimodule with respect to the operations
		\[\langle \xi+ I_s,\eta+ I_s\rangle_{A/I} \coloneqq\langle \xi,\eta\rangle_A+I,\quad \xi,\eta\in  A_s,\]
		\[{}_{A/I}\langle \xi+ I_s,\eta+ I_s\rangle \coloneqq {}_A\langle \xi,\eta\rangle+I,\quad \xi,\eta\in  A_s,\]
		\[(\xi+ I_s)  (a+I)\coloneqq \xi  a +  I_s,\quad \xi\in  A_s,a\in A,\]
		\[(a+I)  (\xi+ I_s)\coloneqq a  \xi+ I_s,\quad \xi\in  A_s,a\in A.\]
for every $s\in S$. 
The Hilbert module norm of $A_s/I_s$ is equal to the Banach space quotient norm. 
Moreover, there is a canonical right Hilbert $A/I$-module isomorphism $A_s/I_s \cong A_s \otimes_A A/I$ and a left Hilbert $A/I$-module isomorphism $A_s/I_s\cong A/I\otimes_A A_s$. 
\end{prop}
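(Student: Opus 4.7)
The plan is to first verify that the proposed operations descend to the quotient, then check the Hilbert bimodule axioms, and finally identify $A_s/I_s$ with the interior tensor product $A_s\otimes_A(A/I)$, from which the norm equality and the two tensor product isomorphisms will follow.

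For well-definedness, the key observation is that each term lands in the correct ideal thanks to \autoref{idealdef}: if $\xi\in I_s$ and $\eta\in A_s$, then $\langle\xi,\eta\rangle_A=\xi^*\eta\in I_{s^*}\cdot A_s\subset I_{s^*s}\subset I$ by axioms (2) and (1), with symmetric arguments handling the other variable and the left inner product; and if $\xi\in A_s$ and $a\in I$, then $\xi a\in A_s\cdot I_1\subset I_s$ by axiom (1) with $t=1$. Sesquilinearity, the compatibility relation between the two inner products, and commutation of the $A/I$-actions then descend directly from $A_s$, while positivity of $\langle\xi+I_s,\xi+I_s\rangle_{A/I}$ is inherited through the quotient $\ast$-homomorphism $A\to A/I$.

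The crucial intermediate identity is $I_s=A_s\cdot I=I\cdot A_s$ as closed subspaces of $A_s$. The inclusions $A_s I\subset I_s$ and $IA_s\subset I_s$ are instances of axiom (1) of \autoref{idealdef} with $t=1$ or $s=1$. For the reverse, I would use that $I_s$ is naturally a Hilbert $I$-bimodule under the inherited operations (as noted in the remark preceding the proposition); the Cohen--Hewitt factorization theorem then yields $I_s=I_s\cdot I\subset A_s\cdot I$, and symmetrically on the left.

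Once $I_s=A_s\cdot I$ is in hand, I would define $\Phi\colon A_s\otimes_A(A/I)\to A_s/I_s$ by $\Phi(\xi\otimes(a+I))=\xi a+I_s$, checking well-definedness on the balanced tensor product, preservation of inner products by direct computation, and surjectivity via $A_s=A_s\cdot A$ (Cohen--Hewitt). The equality of the Hilbert module norm on $A_s/I_s$ with the Banach space quotient norm then comes from a standard approximate-unit argument: for an approximate unit $(e_\lambda)$ of $I$, one has $\xi e_\lambda\in A_s\cdot I= I_s$ and
\[
\|\xi-\xi e_\lambda\|^2=\|(1-e_\lambda)\langle\xi,\xi\rangle_A(1-e_\lambda)\|\longrightarrow \|\langle\xi,\xi\rangle_A+I\|_{A/I}=\|\xi+I_s\|^2,
\]
giving the nontrivial direction, while the other is immediate from $\|\langle\xi,\xi\rangle_A+I\|_{A/I}\leq\|\langle\xi-\eta,\xi-\eta\rangle_A\|=\|\xi-\eta\|^2$ for every $\eta\in I_s$. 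The left isomorphism $A_s/I_s\cong(A/I)\otimes_A A_s$ is symmetric. The main obstacle is the careful accounting in well-definedness and in establishing $I_s=A_s\cdot I$: each step invokes a specific axiom of \autoref{idealdef}, and once these are sorted, the remainder is standard Hilbert module manipulation.
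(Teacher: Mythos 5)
Your proposal is correct and follows essentially the same route as the paper: well-definedness from the ideal axioms, equality of the Hilbert module norm with the Banach quotient norm via an approximate-unit estimate, and the identification $A_s/I_s\cong A_s\otimes_A A/I$ through the obvious map, with Cohen--Hewitt factorization giving surjectivity. The only (harmless) deviations are that you use an approximate unit of $I$ where the paper works with one of $I_{s^*s}$ together with the inclusion $(A_{s^*s}+I)/I\subset A/I$, and that definiteness of the quotient inner product---which the paper verifies separately via the factorization $\xi=\eta\langle\eta,\eta\rangle_A$---is not delivered by "positivity is inherited" alone but does follow in your setup from the norm equality combined with closedness of $I_s$.
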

\begin{proof}
It follows from \autoref{inc-star} and $I_s  A_t+A_s  I_t\subset I_{st}$ for any $s,t\in S$ that the above operations are well-defined.
It is straightforward to check that $A_s/I_s$ is a right \emph{semi-}inner product $A/I$-module. 
We check that the \emph{semi-}inner product is non-degenerate (i.e. that $A_s/I_s$ is an inner product $A/I$-module). 
Let $\xi\in A_s$ satisfy
\begin{equation}
\langle \xi+I_s, \xi+I_s\rangle_{A/I}=0\in A/I,\notag
\end{equation}
that is to say $\langle \xi, \xi\rangle_A\in I$. 
Since there exists a unique $\eta\in A_s$ satisfying $\xi=\eta \langle\eta,\eta\rangle_A$ (see \cite[Proposition 2.31]{RW-98}), it follows from
\begin{equation}
\langle \xi, \xi\rangle_A=\langle\eta \langle\eta,\eta\rangle_A,\eta \langle\eta,\eta\rangle_A\rangle=\langle\eta,\eta\rangle_A^3\in I\notag
\end{equation}
that $\langle\eta,\eta\rangle_A\in I$. Hence $\xi=\eta \langle\eta,\eta\rangle_A\in A_s  I\subset I_s$.
Thus, $A_s/I_s$ is a right inner product $A/I$-module. Similarly, $A_s/I_s$ is a left inner product $A/I$-module.

To see that $A_s/I_s$ is already complete (i.e. a right Hilbert $A/I$-module), we prove that the Hilbert module norm is equal to the Banach space quotient norm. 
Since $I_e, A_e$ and $I$ are closed two-sided ideals in $A$ for every $e\in E(S)$, we have 
\[A_e  I\subset I_e\subset A_e\cap I=A_e  I.\]
Hence, we have the canonical inclusion
\begin{equation}\label{eq-quotient-inclusion}
A_{s^*s}/I_{s^*s}=A_{s^*s}/(A_{s^*s}\cap I)\cong (A_{s^*s}+I)/I\subset A/I.
\end{equation}
In addition, we have 
\begin{equation}\label{eq-inner-prod-s}
\langle \xi,\xi\rangle_A=\mu_{s^*,s}(\xi^*\otimes \xi)\in A_{s^*s},\quad \text{ for all }\xi\in A_s.
\end{equation}
It follows from \eqref{eq-quotient-inclusion} and \eqref{eq-inner-prod-s} that 
\[\|\langle \xi,\xi\rangle_A+I_{s^*s}\|_{A_{s^*s}/I_{s^*s}}=\|\langle \xi,\xi\rangle_A+I\|_{A/I}.\]
Hence, we have
		\begin{align*}
			\|\langle \xi+I_s,\xi+I_s\rangle_{A/I} \|&=\|\langle \xi,\xi\rangle_A+I_{s^*s}\|_{A_{s^*s}/ I_{s^*s}}\\
			&=\inf_{x\in I_{s^*s}}\|\langle \xi,\xi\rangle_A +x\|\\
			&\leq \inf_{\eta\in I_s}\|\langle \xi,\xi\rangle_A +\underbrace{\langle \eta,\xi\rangle_A + \langle \xi,\eta\rangle_A + \langle \eta,\eta\rangle_A}_{\in I_{s^*s}} \|\\
			&=\inf_{\eta\in I_s}\|\langle \xi+\eta,\xi+\eta\rangle_A \|\\
			&=\inf_{\eta\in I_s}\|\xi+\eta\|_{A_s}^2\\
			&=\|\xi+ I_s\|^2_{A_s/I_s}.
		\end{align*}
For the converse direction, let $(e_\lambda)_\lambda$ be an approximate unit of $I_{s^*s}$. Note that 
\[\|a+I_{s^*s}\|=\lim_{\lambda}\|a-e_{\lambda}a\|\]
for each $a\in A_{s^*s}$ (see \cite[Theorem 3.1.3]{Murphy2014}). 
By substituting $\eta = -\xi e_\lambda\in I_s$, we get
		\begin{align*}
			\|\xi+I_s\|^2_{A_s/I_s}&=\inf_{\eta\in I_s}\|\langle \xi,\xi\rangle_A +\langle \eta,\xi\rangle_A + \langle \xi,\eta\rangle_A + \langle \eta,\eta\rangle_A \|\\
			&\leq \liminf_{\lambda}\|\langle \xi,\xi\rangle_A -\langle \xi e_\lambda,\xi\rangle_A - \langle \xi,\xi e_\lambda\rangle_A + \langle \xi e_\lambda,\xi e_\lambda\rangle_A \|\\
			&= \liminf_\lambda \|(1-e_\lambda)\langle\xi,\xi\rangle_A (1-e_\lambda)\|\\
			&\leq \liminf_{\lambda}\|(1-e_\lambda)\langle \xi,\xi\rangle_A \|\\
			&=\|\langle \xi,\xi\rangle_A+I_{s^*s} \|_{A_{s^*s}/I_{s^*s}}\\
            &=\|\langle \xi+I_s,\xi+I_s\rangle_{A/I} \|.
		\end{align*}
Hence, $A_s/I_s$ is a right Hilbert $A/I$-module. Similarly, $A_s/I_s$ is also a left Hilbert $A/I$-module. 

For any $\xi,\eta,\zeta\in A_s$, it follows from ${_A}\langle\xi,\eta\rangle \zeta=\xi \langle\eta,\zeta\rangle_A$ that
$${_{A/I}}\langle\xi+I_s,\eta+I_s\rangle (\zeta+I_s)=(\xi+I_s) \langle\eta+I_s,\zeta+I_s\rangle_{A/I}.$$
Hence, $A_s/I_s$ is a Hilbert $A/I$-bimodule.

For the last claim, note that the map
	\[ \Phi_s\colon A_s\odot_A A/I\to A_s/I_s,\quad \xi\otimes (a+I) \mapsto (\xi+I_s) (a+I)\]
is right $A/I$-linear and surjective by the Cohen--Hewitt factorization the\-o\-rem. 
Since we have 
\begin{align*}
	&\langle \Phi_s(\xi_1\otimes (a_1+I)),\Phi_s(\xi_1\otimes (a_1+I))\rangle_{A/I}\\
	&= \langle \xi_1a_1+I_s,\xi_2a_2+I_s\rangle_{A/I}\\
	&=(a_1+I)^*\langle \xi_1+I_s,\xi_2+I_s\rangle_{A/I}(a_2+I)\\
	&= (a_1+I)^*(\langle \xi_1,\xi_2\rangle_{A}+I)(a_2+I)\\
	&= \langle \xi_1 \otimes (a_1+I),\xi_2 \otimes (a_2+I)\rangle_{A/I},
\end{align*}
for all $\xi_1,\xi_2\in A_s$ and $a_1,a_2\in A$, the map $\Phi_s$ is isometric and thus descends to an isomorphism $A_s\otimes_A A/I\cong A_s/I_s$. 
The isomorphism $A_s/I_s\cong A/I\otimes_A A_s$ is obtained analogously.
\end{proof}
\begin{lem}\label{quotient-bund-2}
	Suppose that $\mathcal{I}$ is an ideal with unit fiber $I=I_1$ in a Fell bundle $\mathcal{A}$ over $S$ with unit fiber $A=A_1$. Then the bundle $\mathcal{A}/\mathcal{I}\coloneqq(A_s/I_s)_{s\in S}$ is a Fell bundle over $S$ with the unit fiber $A/I$ and  the multiplication $\mu_{s,t}^{\mathcal{I}}\colon A_s/I_s\otimes_{A/I} A_t/I_t\to A_{st}/I_{st}$ defined by $\mu_{s,t}^{\mathcal{I}}((\xi+I_s)\otimes(\eta+ I_t))=\mu_{s,t}(\xi\otimes\eta)+I_{st}$.
\end{lem}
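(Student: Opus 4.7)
The plan is to verify the three axioms of \autoref{defn-fell-bdl} for the collection $(A_s/I_s)_{s\in S}$, endowed with the Hilbert $A/I$-bimodule structure already produced in \autoref{quotient-bund-1} and with the multiplications $\mu^{\mathcal I}_{s,t}$ specified in the statement.

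First I would check that $\mu^{\mathcal I}_{s,t}$ is well-defined. Since \autoref{idealdef}\eqref{Item-ideal-1} gives $\mu_{s,t}(I_s\odot A_t)+\mu_{s,t}(A_s\odot I_t)\subset I_{st}$, the composition $A_s\odot A_t\xrightarrow{\mu_{s,t}}A_{st}\twoheadrightarrow A_{st}/I_{st}$ vanishes on the subspace generated by $I_s\odot A_t$ and $A_s\odot I_t$, hence descends to $A_s/I_s\odot A_t/I_t$. The $A$-balancing of $\mu_{s,t}$ automatically yields $A/I$-balancing after passing to the quotient, so we obtain a bimodule map $\mu^{\mathcal I}_{s,t}\colon A_s/I_s\otimes_{A/I}A_t/I_t\to A_{st}/I_{st}$.

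Next I would show that $\mu^{\mathcal I}_{s,t}$ is a Hilbert $A/I$-bimodule isomorphism. Surjectivity is immediate from the surjectivity of $\mu_{s,t}$ together with the surjectivity of the quotient map $A_{st}\twoheadrightarrow A_{st}/I_{st}$. To see that $\mu^{\mathcal I}_{s,t}$ preserves the right inner product, I would compute
\[\bigl\langle\mu^{\mathcal I}_{s,t}(\bar\xi_1\otimes\bar\eta_1),\mu^{\mathcal I}_{s,t}(\bar\xi_2\otimes\bar\eta_2)\bigr\rangle_{A/I}=\langle\xi_1\eta_1,\xi_2\eta_2\rangle_A+I=\langle\eta_1,\langle\xi_1,\xi_2\rangle_A\eta_2\rangle_A+I,\]
which by \autoref{quotient-bund-1} is precisely the inner product of $\bar\xi_1\otimes\bar\eta_1$ and $\bar\xi_2\otimes\bar\eta_2$ in $A_s/I_s\otimes_{A/I}A_t/I_t$. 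Hence $\mu^{\mathcal I}_{s,t}$ is an inner-product preserving surjection of Hilbert modules and therefore an isomorphism; the left inner product is handled analogously. A more conceptual route is to compose the natural isomorphisms $A_s/I_s\otimes_{A/I}A_t/I_t\cong (A_s\otimes_A A/I)\otimes_{A/I}(A/I\otimes_A A_t)\cong A_s\otimes_A A_t\otimes_A A/I\cong A_{st}\otimes_A A/I\cong A_{st}/I_{st}$ produced in \autoref{quotient-bund-1} and check that the composition agrees with $\mu^{\mathcal I}_{s,t}$ on elementary tensors.

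Finally, the three axioms of \autoref{defn-fell-bdl} are inherited from $\mathcal A$: the unit fiber identification $A_1/I_1=A/I$ is tautological, the canonical unit isomorphisms $\mu^{\mathcal I}_{s,1}$ and $\mu^{\mathcal I}_{1,s}$ descend directly from those of $\mathcal A$ by unwinding the definition, and the associativity pentagon commutes because it does so for $\mathcal A$ and the quotient maps intertwine every arrow in the diagram. The main obstacle is the verification that $\mu^{\mathcal I}_{s,t}$ is an isometry, since this requires knowing that the Hilbert module norm on $A_s/I_s$ matches the Banach space quotient norm; fortunately this was already established in \autoref{quotient-bund-1}, so all that remains is careful bookkeeping with the quotient identifications.
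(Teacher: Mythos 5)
Your proposal is correct and follows essentially the same route as the paper: define $\mu^{\mathcal I}_{s,t}$ on elementary tensors, use \autoref{idealdef}\eqref{Item-ideal-1} for well-definedness, deduce $A/I$-bilinearity and surjectivity from the corresponding properties of $\mu_{s,t}$, verify preservation of both inner products by the same elementary-tensor computation, and then inherit the unit and associativity axioms of \autoref{defn-fell-bdl} from $\mathcal A$ via the quotient maps. The only slight inaccuracy is your closing remark: the isometry of $\mu^{\mathcal I}_{s,t}$ follows directly from inner-product preservation, and what \autoref{quotient-bund-1} is really needed for is that $A_s/I_s$ is a complete Hilbert $A/I$-bimodule in the first place (which you do invoke), so this does not affect the argument.
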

\begin{proof}
For any $s,t\in S$, the map
\begin{equation}
  \tilde\mu_{s,t}\colon A_s/I_s\odot_{A/I} A_t/I_t\rightarrow A_{st}/I_{st},\quad(\xi_s+I_s)\otimes(\xi_t+I_t)\mapsto \mu_{s,t}(\xi_s\otimes \xi_t)+I_{st}\notag
\end{equation}
is well-defined by $I_s  A_t+A_s I_t\subset I_{st}$, $A/I$-bilinear by $A$-bilinearity of $\mu_{s,t}$ and surjective by surjectivity of $\mu_{s,t}$. 
Next we check that $\tilde\mu_{s,t}$ preserves inner products for all $s,t\in S$. This can be checked on elementary tensors. For all $\xi,\xi'\in A_s$ and $\eta,\eta'\in A_t$, we have
		\begin{align*}
			&\langle \tilde \mu_{s,t}((\xi+ I_s)\otimes (\eta+ I_t)),\tilde \mu_{s,t}((\xi'+ I_s)\otimes (\eta'+ I_t))\rangle_{A/I} \\
			&=  \langle  \mu_{s,t}(\xi\otimes  \eta), \mu_{s,t}(\xi'\otimes \eta')\rangle_{A} +I\\
			&=\langle \xi\otimes \eta,\xi'\otimes \eta'\rangle_{A}+I\\
			&= \langle \eta,\langle \xi,\xi'\rangle_A \eta'\rangle_A+I\\
			&= \langle \eta+ I_s,\langle \xi+ I_s,\xi'+ I_t\rangle_{A/I} (\eta'+ I_t)\rangle_{A/I}\\
			&= \langle (\xi+ I_s)\otimes (\eta+ I_t),(\xi'+ I_s)\otimes (\eta'+ I_t)\rangle_{A/I}.
		\end{align*}
Similarly, $\tilde\mu_{s,t}$ preserves the left inner product. 
Hence, $\tilde\mu_{s,t}$ induces a Hilbert bimodule isomorphism $\mu_{s,t}^{\mathcal{I}}\colon A_s/I_s\otimes_{A/I} A_t/I_t\to A_{st}/I_{st}$. 
Since $\mathcal{A}$ is a Fell bundle over $S$ with the multiplication $\{\mu_{s,t}\}_{s,t\in S}$, it is straightforward to check that $\mathcal{A}/\mathcal{I}$ is a Fell bundle over $S$ with the multiplication $\{\mu^{\mathcal{I}}_{s,t}\}_{s,t\in S}$.
\end{proof}

The following definition is equivalent to \cite[Definition~4.1]{Kwasniewski2023} which can be seen by the combination of \autoref{indeced-homo} and the proof of \cite[Proposition~4.2]{Kwasniewski2023}.
\begin{defn}\label{defn-hom}
Let $\mathcal{A}=(A_s)_{s\in S}$ and $\mathcal{B}=(B_s)_{s\in S}$ be two Fell bundles over $S$. We say $\varphi=(\varphi_s)_{s\in S}\colon \mathcal A\to \mathcal B$ is a \emph{Fell bundle homomorphism} from $\mathcal{A}$ to $\mathcal{B}$ if
\begin{enumerate}
\item $\varphi_s\colon A_s\rightarrow B_s$ is a linear map for all $s\in S$;
\item $\varphi_{st}(\xi_s \eta_t)=\varphi_s(\xi_s) \varphi_t(\eta_t)$ for all $s,t\in S$ and $\xi_s\in A_s$, $\eta_t\in A_t$;
\item $\varphi_{s^*}(\xi_s^*)=\varphi_s(\xi_s)^*$ for all $s\in S$ and $\xi_s\in A_s$; \label{item-hom-star}
\item $\varphi_s(\xi_s)=\varphi_t(\xi_s)$ for all $s,t\in S$ with $s\leq t$ and $\xi_s\in A_s$.
\end{enumerate}
In addition, we say that $\varphi$ is injective/surjective/an isomorphism if $\varphi_s$ is injective/surjective/an isomorphism for every $s\in S$.
\end{defn}
Note that for every $s\in S$, the map $(\xi_s+I_s)^*\mapsto \xi_s^*+I_{s^*}$ for $\xi_s\in A_s$ implements the canonical isomorphism $J_s\colon (A_s/I_s)^*\cong A_{s^*}/I_{s^*}$. This is used to check condition \eqref{item-hom-star} of \autoref{defn-hom} in the following example.
\begin{eg}
Suppose that $\mathcal{I}=(I_s)_{s\in S}$ is an ideal of a Fell bundle $\mathcal{A}=(A_s)_{s\in S}$ over $S$. Then the inclusion map $\iota=(\iota_s\colon I_s\to A_s)_{s\in S}\colon \mathcal I\to \mathcal A$ and the quotient map $q=(q_s\colon A_s\to A_s/I_s)_{s\in S}\colon \mathcal A\to \mathcal A/\mathcal I$ are Fell bundle homomorphisms, where $\iota_s\colon I_s\hookrightarrow A_s$ and $q_s\colon A_s\twoheadrightarrow A_s/I_s$ are the canonical inclusion and quotient maps.
Note that $\iota$ is injective whereas $q$ is surjective. 
\end{eg}
\begin{prop}\label{indeced-homo}
Let $\mathcal{A}=(A_s)_{s\in S}$ and $\mathcal{B}=(B_s)_{s\in S}$ be two Fell bundles over $S$. Then any Fell bundle homomorphism $\varphi\colon\mathcal{A}\rightarrow \mathcal{B}$ induces a $*$-ho\-mo\-mor\-phism 
\[C^*(\varphi)\colon C^*(\mathcal{A})\rightarrow C^*(\mathcal{B}),\quad C^*(\varphi)(\xi_s \delta_s)=\varphi_s(\xi_s)\delta_s.\]
\end{prop}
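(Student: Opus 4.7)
The plan is to use the universal property \autoref{univ-property} to avoid any explicit manipulation of the ideal $\mathcal{J}_\mathcal{A}$. The idea is that a Fell bundle homomorphism $\varphi\colon \mathcal A\to \mathcal B$ should transport representations of $\mathcal B$ to representations of $\mathcal A$ via precomposition, and integrating these on both sides produces the desired $*$-homomorphism between the universal $C^*$-algebras.

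First, I would verify that for any representation $\pi=(\pi_s)_{s\in S}\colon \mathcal B\to \mathcal B(H)$, the family $\pi\circ \varphi \coloneqq (\pi_s\circ \varphi_s)_{s\in S}$ is a representation of $\mathcal A$ on $H$. Each $\pi_s\circ \varphi_s$ is linear, and the remaining three axioms of a representation are immediate consequences of the corresponding axioms of a Fell bundle homomorphism in \autoref{defn-hom} combined with those of $\pi$:
\begin{align*}
(\pi_s\circ\varphi_s)(\xi_s)(\pi_t\circ\varphi_t)(\xi_t) &= \pi_{st}(\varphi_s(\xi_s)\varphi_t(\xi_t)) = \pi_{st}(\varphi_{st}(\xi_s \xi_t)),\\
(\pi_s\circ\varphi_s)(\xi_s)^* &= \pi_{s^*}(\varphi_s(\xi_s)^*) = \pi_{s^*}(\varphi_{s^*}(\xi_s^*)),
\end{align*}
and compatibility with $s\leq t$ is similar.

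Next, I would fix a faithful non-degenerate representation $\tilde \pi\colon C^*(\mathcal B)\to \mathcal B(H)$ (which exists since $C^*(\mathcal B)$ is a $C^*$-algebra). By \autoref{univ-property} it corresponds to a non-degenerate representation $\pi$ of $\mathcal B$ on $H$ with $\tilde \pi(b_s\delta_s) = \pi_s(b_s)$. Applying the first paragraph, $\pi\circ\varphi$ is a representation of $\mathcal A$ on $H$ which again by \autoref{univ-property} integrates to a $*$-homomorphism $\widetilde{\pi\circ\varphi}\colon C^*(\mathcal A)\to \mathcal B(H)$ satisfying
\[\widetilde{\pi\circ\varphi}(\xi_s\delta_s) = \pi_s(\varphi_s(\xi_s)) = \tilde\pi(\varphi_s(\xi_s)\delta_s)\]
for all $s\in S$ and $\xi_s\in A_s$. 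Since the elements $\xi_s\delta_s$ span a dense subset of $C^*(\mathcal A)$ and $\tilde \pi(C^*(\mathcal B))$ is closed in $\mathcal B(H)$, this shows $\widetilde{\pi\circ\varphi}(C^*(\mathcal A))\subset \tilde\pi(C^*(\mathcal B))$. Faithfulness of $\tilde \pi$ then allows us to define
\[C^*(\varphi)\coloneqq \tilde \pi^{-1}\circ \widetilde{\pi\circ\varphi}\colon C^*(\mathcal A)\to C^*(\mathcal B),\]
which is a $*$-homomorphism with the required behaviour on generators $\xi_s\delta_s$.

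The main obstacle I anticipate is not conceptual but notational: one must take care that the composition $\pi\circ\varphi$ genuinely satisfies the axioms of a representation (in particular the $*$-compatibility axiom, where $\varphi_{s^*}$ and $\pi_{s^*}$ must match up correctly), and one must justify that a faithful representation of $C^*(\mathcal B)$ actually comes from a representation of the Fell bundle $\mathcal B$ via the universal property, which is precisely the content of \autoref{univ-property}. Once these are in place, the rest is immediate from the density of $C_c(\mathcal B)/\mathcal J_{\mathcal B}$ in $C^*(\mathcal B)$ and faithfulness of $\tilde\pi$.
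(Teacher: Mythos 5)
Your proposal is correct and follows essentially the same route as the paper: the paper fixes a faithful embedding $C^*(\mathcal B)\subset\mathcal B(H)$, observes that $\xi_s\mapsto\varphi_s(\xi_s)\delta_s$ defines a representation of $\mathcal A$ on $H$, and integrates it via \autoref{univ-property}, which is exactly your argument with $\tilde\pi$ taken to be the inclusion. Your extra steps (checking the image lands in the closed subalgebra $\tilde\pi(C^*(\mathcal B))$ and composing with $\tilde\pi^{-1}$) just make explicit what the paper leaves implicit.
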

\begin{proof}
Suppose that $C^*(\mathcal{B})\subset \mathcal B(H)$ for some Hilbert space $H$.
For $s\in S$, we define $\pi_s$ as the composition
\begin{equation*}
\pi_s\colon A_s\stackrel{\varphi_s}{\longrightarrow} B_s\hookrightarrow C^*(\mathcal{B})\subset \mathcal B(H).
\end{equation*}
Then $\pi_s(\xi_s)=\varphi_s(\xi_s)\delta_s$ for every $\xi_s\in A_s$. It is straightforward to check that $\pi=(\pi_s)_{s\in S}$ is a representation of $\mathcal{A}$ on $H$. By \autoref{univ-property}, there is a homomorphism
$C^*(\varphi)\colon C^*(\mathcal{A})\rightarrow C^*(\mathcal{B})$ such that $C^*(\varphi)(\xi_s\delta_s)=\varphi_s(\xi_s)\delta_s$.
\end{proof}

\begin{defn}
Let $S$ be a unital inverse semigroup. A sequence
\[0\longrightarrow \mathcal{I}\stackrel{\iota}{\longrightarrow}  \mathcal{A}\stackrel{\rho}{\longrightarrow} \mathcal{B}\longrightarrow 0\]
of Fell bundles over $S$ with Fell bundle homomorphisms is called \emph{exact} if $\iota$ is an ideal inclusion and $\rho$ is a surjective Fell bundle homomorphism such that $\ker (\rho_s)=\im (\iota_s)$ for all $s\in S$.
\end{defn}
\begin{rem}\label{induced-fell-bun-iso}
In the above definition, the Fell bundle homomorphism $\rho$ induces a canonical Fell bundle isomorphism $\tilde{\rho}\colon\mathcal{A}/\mathcal{I}\cong \mathcal{B}$ satisfying $\rho=\tilde{\rho}\circ q$, where $q\colon\mathcal{A}\rightarrow \mathcal{A}/\mathcal{I}$ is the quotient map. Moreover, we have $C^*(\rho)=C^*(\tilde{\rho})\circ C^*(q)$.
\end{rem}
Suppose that $\mathcal A=(A_s)_{s\in S}$ is a Fell bundle over a unital inverse semigroup $S$ and that $\pi=(\pi_s)_{s\in S}\colon\mathcal{A}\to \mathcal B(H)$ is a representation. 
If $\mathcal{I}=(I_s)_{s\in S}$ is an ideal of $\mathcal{A}$, then the restriction $\pi|_{\mathcal{I}}=(\pi_s|_{I_s})_{s\in S}$ is a representation of $\mathcal{I}$. 
Conversely, given a non-degenerate representation of the ideal $\mathcal{I}$, the following result shows that we can extend it to obtain a representation of $\mathcal{A}$. 
This is subsequently used to prove that $C^*(\mathcal{I})$ embeds into $C^*(\mathcal{A})$. Note that the following result is included in the proof of \cite[Proposition 4.15]{Kwasniewski2023}. For the convenience of reader, we provide a complete proof along with detailed supplementary explanations.

\begin{lem}\label{lemextend}
Let $\mathcal{I}=(I_s)_{s\in S}$ be an ideal of a Fell bundle $\mathcal{A}=(A_s)_{s\in S}$ over $S$. 
If $\pi=(\pi_s)_{s\in S}$ is a non-degenerate representation of $\mathcal{I}$ on a Hilbert space $H$, then $\pi$ can be extended to a representation $\tilde{\pi}=(\tilde{\pi}_s)_{s\in S}$ of $\mathcal{A}$ on $H$.
\end{lem}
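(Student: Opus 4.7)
The plan is to extend $\pi$ using the non-degeneracy of $\pi_1$ on $I_1$ together with the $\mathcal A$-invariance of $I_1$. The key observation is that for $\xi_s \in A_s$ and $a \in I_1$, we have $\xi_s a \in A_s I_1 \subset I_s$, so $\pi_s(\xi_s a)$ is already defined by the given representation of $\mathcal I$.

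First I would define $\tilde\pi_s(\xi_s)$ on the dense subspace $\pi_1(I_1) H \subset H$ (dense by non-degeneracy) via
\[\tilde\pi_s(\xi_s) \pi_1(a) v \coloneqq \pi_s(\xi_s a) v.\]
To see this is well-defined and bounded with $\|\tilde\pi_s(\xi_s)\| \leq \|\xi_s\|$, I would expand
\[\Bigl\|\sum_i \pi_s(\xi_s a_i) v_i\Bigr\|^2 = \Bigl\langle \sum_i \pi_1(a_i) v_i,\; \bar\pi_1\bigl(\langle \xi_s, \xi_s\rangle_{A_1}\bigr) \sum_j \pi_1(a_j) v_j \Bigr\rangle,\]
where $\bar\pi_1 \colon A_1 \to \mathcal B(H)$ denotes the canonical extension of $\pi_1$ via the embedding $A_1 \hookrightarrow M(I_1)$ coming from $\mathcal A$-invariance. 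This is bounded by $\|\xi_s\|^2 \|\sum_i \pi_1(a_i) v_i\|^2$, proving both well-definedness and the norm bound; continuous extension then yields $\tilde\pi_s(\xi_s) \in \mathcal B(H)$. An equivalent description is $\tilde\pi_s(\xi_s) = \mathrm{SOT}\text{-}\lim_\lambda \pi_s(\xi_s e_\lambda)$ for any approximate unit $(e_\lambda)$ of $I_1$, which will be convenient in what follows.

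Next I would verify that $\tilde\pi = (\tilde\pi_s)_{s\in S}$ is a representation of $\mathcal A$. Linearity is immediate, and the order compatibility $\tilde\pi_s(\xi_s) = \tilde\pi_t(\xi_s)$ for $s \leq t$ is inherited from the corresponding axiom of $\pi|_{\mathcal I}$ applied to $\xi_s a \in I_s \subset I_t$. For multiplicativity, with $u = \pi_1(a) v$ in the dense subspace:
\begin{align*}
\tilde\pi_s(\xi_s) \tilde\pi_t(\xi_t) u &= \tilde\pi_s(\xi_s) \pi_t(\xi_t a) v = \lim_\lambda \pi_s(\xi_s e_\lambda) \pi_t(\xi_t a) v \\
&= \lim_\lambda \pi_{st}(\xi_s e_\lambda \xi_t a) v = \pi_{st}(\xi_s \xi_t a) v = \tilde\pi_{st}(\xi_s \xi_t) u,
\end{align*}
where the fourth equality uses $\xi_t a \in I_t$ and $e_\lambda \xi_t a \to \xi_t a$ in $I_t$. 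The involution axiom is verified by showing that both $\langle \tilde\pi_s(\xi_s) \pi_1(a) v,\, \pi_1(a') v'\rangle$ and $\langle \pi_1(a) v,\, \tilde\pi_{s^*}(\xi_s^*) \pi_1(a') v'\rangle$ reduce to $\langle v, \pi_{s^*}(a^* \xi_s^* a') v'\rangle$ using the representation property of $\pi|_{\mathcal I}$.

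Finally, for $\eta_s \in I_s$, the defining formula gives $\tilde\pi_s(\eta_s) \pi_1(a) v = \pi_s(\eta_s a) v = \pi_s(\eta_s) \pi_1(a) v$, so $\tilde\pi_s|_{I_s} = \pi_s$ on the dense subspace and hence everywhere. The main obstacle will be the multiplicativity step: the naive attempt to pass to the limit simultaneously in two approximate units $\pi_s(\xi_s e_\lambda) \pi_t(\xi_t e_\mu)$ breaks down because $e_\mu \xi_t$ need not converge to $\xi_t$ in $A_t$ (only its image in $I_t$ survives in the limit). The remedy is to let $\tilde\pi_s(\xi_s)\tilde\pi_t(\xi_t)$ act on vectors $\pi_1(a)v$, so that all absorption happens at the rightmost position, where $\xi_t a \in I_t$ already sits inside the ideal.
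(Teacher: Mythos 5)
Your proposal is correct and follows essentially the same route as the paper: define $\tilde\pi_s(\xi_s)$ on $\pi_1(I_1)H$ by $\tilde\pi_s(\xi_s)\pi_1(a)v=\pi_s(\xi_s a)v$ (using non-degeneracy of $\pi_1$), check well-definedness and the bound $\|\tilde\pi_s(\xi_s)\|\leq\|\xi_s\|$, and then verify the representation axioms, with the approximate-unit/SOT-limit device handling the multiplicativity step exactly as in the paper. The only difference is cosmetic: you obtain the norm bound via the extension of $\pi_1$ through $A_1\to M(I_1)$, whereas the paper estimates $\limsup_\lambda\|\pi_s(\xi_s e_\lambda)\|$ directly; both arguments are sound.
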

\begin{proof}
Let $\pi=(\pi_s)_{s\in S}\colon \mathcal I\to \mathcal B(H)$ be a non-degenerate representation. 
We denote by $I=I_1$ the unit fiber of $\mathcal I$. 
We aim to define a representation $\tilde \pi =(\tilde{\pi}_s)_{s\in S}\colon \mathcal A\to \mathcal B(H)$ by defining 
\begin{equation}
	\tilde\pi_s(a_s)(\pi_1(b)h)\coloneqq\pi_s(a_sb)h
\end{equation}
for any $a_s\in A_s, b\in I$ and $h\in H$. 

To verify that $\tilde\pi_s(a_s)$ is well-defined for any $a_s\in A_s$, we pick an approximate unit $(e_{\lambda})_{\lambda}$ of $I$. 
Suppose that there exist $\pi_1(b_1)h_1=\pi_1(b_2)h_2 \in \pi_1(I)H$. 
Then 
\begin{align*}
\pi_s(a_sb_1)h_1&=\lim_{\lambda}\pi_s(a_se_{\lambda}b_1)h_1=\lim_{\lambda}\pi_s(a_se_{\lambda})\pi_1(b_1)h_1\\
& = \lim_{\lambda}\pi_s(a_se_{\lambda})\pi_1(b_2)h_2=\pi_s(a_sb_2)h_2.
\end{align*}
Since we have $H=\pi_1(I)H$ by the Cohen--Hewitt factorization theorem, we see that $\tilde\pi_s(a_s)\colon H\rightarrow H$ is well-defined for any $a_s\in A_s$. 
To see that $\tilde\pi_s(a_s)$ is bounded, note that 
\begin{align*}
		\|\tilde\pi_s(a_s)(\pi_1(b_1)h_1)\|&= \|\lim_\lambda \pi_{s}(a_s e_\lambda ) \pi_1(b_1)h_1\| \\
		&\leq \limsup_\lambda \|\pi_{s}( a_s e_\lambda ) \pi_1(b_1)h_1\|\\
		&\leq \limsup_\lambda \|\pi_{s}(a_s e_\lambda )\|   \| \pi_1(b_1)h_1\|\\
		&\leq \underbrace{\limsup_\lambda \|e_\lambda\|}_{\leq 1}  \|a_s\|_{A_{s}}  \| \pi_1(b_1)h_1\|.
	\end{align*}
Hence, $\tilde\pi_s(a_s)$ is a bounded linear map on $H$.

Next we check that $\tilde\pi$ is a representation of $\mathcal{A}$. Firstly, for $a_s\in A_s$, $a_t\in A_t$, $b\in I$ and $h\in H$ we have
\begin{equation*}
\tilde\pi_{st}(a_sa_t)(\pi_1(b)h)=\pi_{st}(a_sa_tb)h=\tilde \pi_s(a_s)\pi_t(a_tb)h
=\tilde\pi_{s}(a_s)\tilde\pi_{t}(a_t)\pi_1(b)h.
\end{equation*}
Hence, $\tilde\pi_{st}(a_sa_t)=\tilde\pi_{s}(a_s)\tilde\pi_{t}(a_t)$. Next, for $a_s\in A_s$, $b_1, b_2\in I$ and $h_1,h_2\in H$ we have
\begin{align*}
\langle\tilde\pi_{s}(a_s)(\pi_1(b_1)h_1),\pi_1(b_2)h_2\rangle &=\langle\pi_{s}(a_sb_1)h_1,\pi_1(b_2)h_2\rangle\\
&=\langle h_1,\pi_{s^*}(b_1^*a_s^*b_2)h_2\rangle\\
&=\langle h_1,\pi_1(b_1^*)\pi_{s^*}(a_s^*b_2)h_2\rangle\\
&=\langle \pi_1(b_1)h_1,\pi_{s^*}(a_s^*b_2)h_2\rangle\\
&=\langle \pi_1(b_1)h_1,\tilde\pi_{s^*}(a_s^*)\pi_1(b_2)h_2\rangle.
\end{align*} 
Hence, $\tilde\pi_{s^*}(a_s^*)=\tilde\pi_{s}(a_s)^*$.
Finally, for any $a_s\in A_s,b \in I,h\in H$ and $s,u\in S$ with $s\leq u$ we have
\[\tilde\pi_s(a_s)\pi_1(b)h=\pi_s(a_sb)h=\pi_u(a_sb)h=\tilde\pi_u(a_s)\pi_1(b)h.\]
Hence, $\tilde\pi_s(a_s)=\tilde\pi_u(a_s)$.
This shows that $\tilde \pi$ is a representation. It is easy to see that $\tilde \pi$ extends $\pi$. 
\end{proof}

To obtain the main result in this paper, We will use the following result which is exactly \cite[Proposition 4.15]{Kwasniewski2023}. Here, we reprove it for completeness. 
\begin{thm}\label{exactnessofuniversal}
Let
\[0\longrightarrow \mathcal{I}\stackrel{\iota}{\longrightarrow} \mathcal{A}\stackrel{q}{\longrightarrow}\mathcal B\longrightarrow 0\]
be a short exact sequence of Fell bundles. Then the induced sequence
	\[ 0\longrightarrow C^*(\mathcal I)\xrightarrow{C^*(\iota)} C^*(\mathcal A)\xrightarrow{C^*(q)} C^*(\mathcal B)\longrightarrow 0\]
of $C^*$-algebras is exact. 
\end{thm}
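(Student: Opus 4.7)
The plan is to verify three things in turn: injectivity of $C^*(\iota)$, surjectivity of $C^*(q)$, and $\ker C^*(q) = \operatorname{im} C^*(\iota)$. The first is powered by \autoref{lemextend}, the second is immediate from surjectivity on fibers, and the third is the main point, settled by a universal-property argument on the quotient.

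For injectivity of $C^*(\iota)$, I would take a faithful non-degenerate representation $\pi \colon C^*(\mathcal I) \to \mathcal B(H)$. Via \autoref{univ-property} this corresponds to a non-degenerate Fell bundle representation $(\pi_s)_{s\in S}$ of $\mathcal I$. Applying \autoref{lemextend} yields an extension $(\tilde\pi_s)_{s\in S}$ to $\mathcal A$, which integrates back via \autoref{univ-property} to some $\tilde\pi \colon C^*(\mathcal A) \to \mathcal B(H)$. Evaluating on a generator $\xi\delta_s$ with $\xi\in I_s$ gives $\tilde\pi \circ C^*(\iota)(\xi\delta_s) = \tilde\pi_s(\xi) = \pi_s(\xi) = \pi(\xi\delta_s)$, so $\tilde\pi \circ C^*(\iota) = \pi$, and faithfulness of $\pi$ forces $C^*(\iota)$ to be injective. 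Surjectivity of $C^*(q)$ follows by a routine argument: since each $q_s$ is surjective, the image of $C^*(q)$ contains every $\eta\delta_s$ with $\eta \in B_s$, so its linear span is dense in $C^*(\mathcal B)$; as a $*$-homomorphism of $C^*$-algebras has closed range, $C^*(q)$ is surjective.

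For the remaining inclusion $\ker C^*(q) \subset \operatorname{im} C^*(\iota)$, set $J \coloneqq \operatorname{im} C^*(\iota)$. Functoriality (\autoref{indeced-homo}) gives $C^*(q)\circ C^*(\iota) = 0$, so $J \subset \ker C^*(q)$. Note that $J$ is closed (because the already-established injectivity of $C^*(\iota)$ makes it isometric), and it is a two-sided ideal of $C^*(\mathcal A)$: the Fell bundle ideal conditions $I_s A_t + A_s I_t \subset I_{st}$ from \autoref{idealdef} imply that the dense subspace $\operatorname{span}\{\xi\delta_s : \xi\in I_s\}$ is closed under left and right multiplication by generators $a\delta_t \in \mathbb C_{\operatorname{alg}}(\mathcal A)$. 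Choose a faithful representation of the quotient $C^*(\mathcal A)/J$ on a Hilbert space $K$. The composite $\mathcal A \to C^*(\mathcal A) \twoheadrightarrow C^*(\mathcal A)/J \hookrightarrow \mathcal B(K)$ is a representation of $\mathcal A$ whose $s$-fiber vanishes on $I_s$, and by \autoref{quotient-bund-1}, \autoref{quotient-bund-2}, and \autoref{induced-fell-bun-iso} it descends to a representation of $\mathcal B$. Integrating via \autoref{univ-property} gives a $*$-homomorphism $\sigma \colon C^*(\mathcal B) \to \mathcal B(K)$ with image in $C^*(\mathcal A)/J$. On generators $\sigma \circ C^*(q)(\xi\delta_s) = \xi\delta_s + J$, so $\sigma \circ C^*(q)$ agrees with the quotient map $C^*(\mathcal A) \to C^*(\mathcal A)/J$ on a dense $*$-subalgebra, hence everywhere. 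Therefore $\ker C^*(q) \subset J$, finishing the proof.

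I expect the only genuinely analytic step to be the extension result \autoref{lemextend} used for injectivity; the remaining difficulty is purely formal bookkeeping needed to verify that $J = \operatorname{im} C^*(\iota)$ is a closed two-sided ideal and to thread the representation of $\mathcal A/\mathcal I \cong \mathcal B$ through the universal property in the correct direction to produce the one-sided inverse $\sigma$.
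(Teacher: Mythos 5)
Your proposal is correct and follows essentially the same route as the paper: injectivity of $C^*(\iota)$ via the extension result \autoref{lemextend} combined with the universal property, and the inclusion $\ker C^*(q)\subset\operatorname{im} C^*(\iota)$ by representing $C^*(\mathcal A)/\operatorname{im} C^*(\iota)$ faithfully, observing that the corresponding representation of $\mathcal A$ kills $\mathcal I$ and descends to $\mathcal A/\mathcal I\cong\mathcal B$, and integrating to obtain the factorization. The only differences are cosmetic (the paper phrases injectivity as a norm estimate over all representations and the second half as isometry of the induced map $C^*(\mathcal A)/C^*(\mathcal I)\to C^*(\mathcal A/\mathcal I)$, whereas you produce the one-sided inverse $\sigma$ directly), so no further comment is needed.
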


\begin{proof}
	In view of \autoref{induced-fell-bun-iso}, we may identify $\mathcal B$ with the quotient Fell bundle $\mathcal A/\mathcal I$ and $\iota$ and $q$ the canonical inclusion and quotient map. 
	We first show that $C^*(\iota)$ is injective. 
	For a Fell bundle $\mathcal{C}$ over $S$, we write
		\[ \mathcal \C_{\alg}(\mathcal C)\coloneqq C_c(\mathcal C)/\mathcal{J}_{\mathcal{C}}.\]
as in \autoref{defn-full-alg}. 
Since $\C_{\alg}(\mathcal I)$ embeds densely into $C^*(\mathcal{I})$, it suffices to prove that $\|C^*(\iota)(x)\|_{C^*(\mathcal{A})}\geq\|x\|_{C^*(\mathcal{I})}$ for every $x=\sum_{s\in S}a_s\delta_s+\mathcal{J}_{\mathcal{I}}\in \C_{\alg}(\mathcal I)$, where the sum has only finitely many non-zero terms. 
Using \autoref{euqal-norm} at the first and last step and \autoref{lemextend} at the second step, we have
\begin{align*}
  \|x\|_{C^*(\mathcal{I})} & =\sup\left\{\left\|\sum_{s\in S}\pi_s(a_s)\right\|\colon\pi \text{ is a (non-degenerate) representation of } \mathcal{I}\right\} \\
  & \leq \sup\left\{\left\|\sum_{s\in S}\pi_s(a_s)\right\|\colon\pi \text{ is a (non-degenerate) representation of } \mathcal{A}\right\} \\
  & = \left\|\sum_{s\in S}a_s\delta_s+\mathcal{J}_{\mathcal{\mathcal{A}}}\right\|_{C^*(\mathcal{A})},
\end{align*}
Hence, $\|C^*(\iota)(x)\|_{C^*(\mathcal{A})}=\|x\|_{C^*(\mathcal{I})}$ and $C^*(\iota)$ is injective.

	Next, we show that $\ker(C^*(q))=\im(C^*(\iota))$. It follows from $\im(\iota_s)\subset \ker(q_s)$ for every $s\in S$ that $\im (C^*(\iota))\subset \ker (C^*(q))$. Note that $\im (C^*(\iota))$ is an ideal of $C^*(\mathcal{A})$ since $\mathcal{I}$ is an ideal of $\mathcal{A}$. For the reverse inclusion, it suffices to show that the natural map
		\[Q\colon C^*(\mathcal A)/C^*(\mathcal I)\to C^*(\mathcal A/\mathcal I)\]
	induced by $C^*(q)$ is isometric, where $C^*(\mathcal I)\subset C^*(\mathcal A)$ by $C^*(\iota)$.

Let $\varphi\colon C^*(\mathcal A)/C^*(\mathcal I)\to  \mathcal B(H)$ be a faithful non-degenerate representation and
		\[P\colon C^*(\mathcal A)\to C^*(\mathcal A)/C^*(\mathcal I)\]
	be the canonical quotient map.
	By \autoref{univ-property}, 
	\[\tilde\pi\colon C^*(\mathcal A)\xrightarrow{P}C^*(\mathcal A)/C^*(\mathcal I)\xrightarrow{\varphi} \mathcal B(H)\]
	is induced by a non-degenerate representation $\pi\colon \mathcal A\to \mathcal B(H)$ so that $\tilde \pi(a_s\delta_s +\mathcal J_A)=\pi_s(a_s)$ for all $s\in S$ and $a_s\in A_s$.
	Note that for $s \in S$ and $a\in I_s$, we have $a\delta_s\in C^*(\mathcal I)\subset C^*(\mathcal A)$. In particular, we have $\pi_s(a) = \tilde \pi(a \delta_s +\mathcal J_A)=\varphi(P(a \delta_s +\mathcal J_A))=0$. Therefore, $\pi$ vanishes on $\mathcal I$ and factors through a non-degenerate representation $\rho \colon \mathcal A/\mathcal I\to \mathcal B(H)$. 
	Denote by $\tilde \rho\colon C^*(\mathcal A/\mathcal I)\to \mathcal B(H)$ the induced representation of $\rho$ by the universal property of \autoref{univ-property}.
	Then we have $\varphi = \tilde\rho\circ Q$ since for each $x\in C^*(\mathcal A)/C^*(\mathcal I)$ of the form $x=\sum_{s\in S}a_s\delta_s + C^*(\mathcal I)$ for finitely many non-zero $a_s\in A_s$, we have 
	\begin{align*}
		 \tilde \rho\circ Q\left(\sum_{s\in S}a_s \delta_s+C^*(\mathcal I)\right)&=\tilde \rho\left( \sum_{s\in S}(a_s+I_s) \delta_s\right)\\
		 &=\sum_{s\in S}\rho_s(a_s+I_s)\delta_s\\
		 &=\sum_{s\in S} \pi_s(a_s)\delta_s\\
		 &=\varphi\circ P\left(\sum_{s\in S}a_s\delta_s\right)\\
		 &=\varphi\left(\sum_{s\in S}a_s \delta_s+C^*(\mathcal I)\right).
	\end{align*}
	Since $\varphi$ is faithful and therefore isometric, we conclude that $Q$ is isometric.
\end{proof}

Now that we understand how taking full cross-sectional algebras interacts with short exact sequences of Fell bundles, we would like to understand how short exact sequences interact with taking tensor products of Fell bundles. 
Given a Fell bundle $\mathcal{A}=(A_s)_{s\in S}$ over $S$ and a $C^*$-algebra $B$, Buss and Mart\'{\i}nez \cite[Section 5]{BM-23} construct a new Fell bundle $\mathcal{A}\otimes B$ over $S$ where every fiber $(\mathcal{A}\otimes B)_s$ is defined as the minimal external tensor product $A_s\otimes B$ of $A_s$ and $B$, where $B$ is viewed as a Hilbert $B$-$B$-bimodule. For every $s,t\in S$, the map
\begin{equation}
(A_s\odot B)\odot(A_t\odot B)\rightarrow A_{st}\otimes B,\quad(a_s\otimes b_1)\otimes(a_t\otimes b_2)\mapsto a_sa_t\otimes b_1b_2\notag
\end{equation}
extends to a Hilbert bimodule isomorphism $(A_s\otimes B)\otimes_{A_1\otimes B}(A_t\otimes B)\rightarrow A_{st}\otimes B$
(see \cite[Lemma 5.2]{BM-23}). 

\begin{lem}\label{shortexactfell}%
Let $\mathcal A=(A_s)_{s\in S}$ be a Fell bundle over a unital inverse semigroup $S$ with exact unit fiber $A=A_1$. If $I$ is a closed two-sided ideal of a $C^*$-algebra $B$, then the canonical sequence 
\[0\to \mathcal A\otimes I\to \mathcal A\otimes B\to \mathcal A\otimes (B/I)\to 0\]
is exact. 
\end{lem}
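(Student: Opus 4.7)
The plan is to verify fibrewise exactness of the sequences
\[0\to A_s\otimes I\to A_s\otimes B\to A_s\otimes (B/I)\to 0\]
and to check that the assembled maps respect the Fell bundle structure and ideal axioms. The latter follow at the level of elementary tensors from the description of $\mathcal A\otimes B$ in \cite[Section~5]{BM-23}: the multiplication $(\xi\otimes i)(\eta\otimes b)=\xi\eta\otimes ib$, the involution $(\xi\otimes i)^*=\xi^*\otimes i^*$, and the restriction map $\iota_{t,s}(\xi)\otimes i$ all preserve the subspaces $A_s\otimes I\subset A_s\otimes B$, verifying the conditions of \autoref{idealdef}. Fibrewise injectivity of $A_s\otimes I\hookrightarrow A_s\otimes B$ is equally automatic, since the $A\otimes I$-valued inner product $\langle \xi\otimes i,\eta\otimes i'\rangle = \langle\xi,\eta\rangle_A\otimes i^*i'$ agrees with the $A\otimes B$-valued inner product via the isometric embedding $A\otimes I\hookrightarrow A\otimes B$.

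The heart of the matter lies in identifying $A_s\otimes (B/I)$ with the quotient Hilbert bimodule $(A_s\otimes B)/(A_s\otimes I)$ furnished by \autoref{quotient-bund-1}. By that proposition, $(A_s\otimes B)/(A_s\otimes I)$ carries a natural Hilbert bimodule structure over $(A\otimes B)/(A\otimes I)$. Exactness of $A$ enters here precisely as the identification $(A\otimes B)/(A\otimes I)=A\otimes (B/I)$. The quotient $q_s\colon A_s\otimes B\to A_s\otimes (B/I)$ vanishes on $A_s\otimes I$ and descends to a Hilbert bimodule map $\bar q_s\colon (A_s\otimes B)/(A_s\otimes I)\to A_s\otimes (B/I)$.

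To conclude, note that for $\eta\in A_s\otimes B$, both $\langle\bar\eta,\bar\eta\rangle_{(A\otimes B)/(A\otimes I)}$ and $\langle q_s(\eta),q_s(\eta)\rangle_{A\otimes (B/I)}$ equal the image of $\langle\eta,\eta\rangle_{A\otimes B}$ under $A\otimes B\twoheadrightarrow A\otimes (B/I)$; hence $\bar q_s$ is isometric. Since its image contains the dense set of elementary tensors $\xi\otimes (b+I)$, it is a surjective isometry. This yields at once that $q_s$ is surjective and $\ker(q_s)=A_s\otimes I$, completing fibrewise exactness. The only nontrivial step in the entire plan is the $C^*$-algebraic identification $(A\otimes B)/(A\otimes I)=A\otimes (B/I)$, which is the very content of exactness of $A$; the rest is a formal consequence of the quotient construction in \autoref{quotient-bund-1}.
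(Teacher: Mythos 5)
Your proposal is correct and follows essentially the same route as the paper's proof: both reduce the statement to showing that the canonical map $(\mathcal A\otimes B)/(\mathcal A\otimes I)\to \mathcal A\otimes(B/I)$ is a fibrewise isometry, using the quotient Hilbert bimodule structure of \autoref{quotient-bund-1} and invoking exactness of $A$ exactly once, for the identification $(A\otimes B)/(A\otimes I)\cong A\otimes(B/I)$ of the coefficient algebras. The only cosmetic difference is that the paper carries out the inner-product comparison explicitly on finite sums of elementary tensors in $A_s\odot B$, while you state the same identity for general elements and appeal to density/continuity.
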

\begin{proof}
It follows from injectivity of the minimal tensor product that the map $\mathcal A\otimes I\to \mathcal A\otimes B$ is an ideal inclusion. 
Moreover, it is straightforward that the canonical map $\mathcal A\otimes B\to \mathcal A\otimes (B/I)$ is surjective and that the composition $\mathcal A\otimes I\to \mathcal A\otimes B\to \mathcal A\otimes (B/I)$ is zero. 
We check that the canonical map $\psi=(\psi_s)_{s\in S}\colon (\mathcal A\otimes B)/(\mathcal A\otimes I)\to \mathcal A\otimes (B/I)$ is isometric by checking it on the image of the algebraic tensor product $A_s\odot B$ in $(A_s\otimes B)/(A_s\otimes I)$. 
Let $s\in S$ and $\sum_{i=1}^n\xi_i\otimes b_i\in A_s\odot B$. Using exactness of $A$ at the third step, we have 
\begin{align*}
  &\hspace{-2cm}\left\|\psi_s\left(\sum_{i=1}^n\xi_i\otimes b_i+A_s\otimes I\right)\right\|^2 \\
  =& \left\|\left\langle\sum_{i=1}^n\xi_i\otimes (b_i+I),\sum_{j=1}^n\xi_j\otimes (b_j+I)\right\rangle_{A\otimes B/I}\right\|\\
  =&\left\|\sum_{i,j=1}^n\langle\xi_i,\xi_j\rangle_A\otimes(b_i^*b_j+I)\right\|\\
  =&\left\|\sum_{i,j=1}^n\langle\xi_i,\xi_j\rangle_A\otimes b_i^*b_j+A\otimes I \right\|\\
  =&\left\|\left\langle\sum_{i=1}^n\xi_i\otimes b_i+A_s\otimes I,\sum_{j=1}^n\xi_j\otimes b_j+A_s\otimes I\right\rangle\right\| \\
  =&\left\|\sum_{i=1}^n\xi_i\otimes b_i+A_s\otimes I\right\|^2.%
\end{align*}
\end{proof}

\section{Proof of the main result}
\begin{thm}\label{mainthm}
Let $\mathcal{A}=(A_s)_{s\in S}$ be a Fell bundle over $S$ with the approximation property. Then $C^*_r(\mathcal A)$ is exact if and only if the unit fiber $A= A_1$ is exact.
\end{thm}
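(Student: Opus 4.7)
The plan is to prove each direction separately, with the ``only if'' direction being almost formal and the ``if'' direction making essential use of the machinery developed in \autoref{sec-ideals} together with the two listed consequences of the approximation property.

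For the ``only if'' direction, I would argue that $A_1$ embeds as a $C^*$-subalgebra of $C^*_r(\mathcal A)$: the left regular representation $\Lambda$ restricted to $A_1 = A_1 \delta_1$ is already faithful because $\ell^2(\mathcal A'')$ contains $A_1$ as an $A_1$-submodule via $a \mapsto a\delta_1$ on which $A_1$ acts by left multiplication, so $\|\Lambda(a\delta_1)\| \geq \|a\|$. Since exactness passes to $C^*$-subalgebras (a theorem of Kirchberg), exactness of $C^*_r(\mathcal A)$ forces exactness of $A_1$.

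For the ``if'' direction, assume $A_1$ is exact and let $0 \to I \to B \to B/I \to 0$ be an arbitrary short exact sequence of $C^*$-algebras. I would first invoke \autoref{shortexactfell}, whose hypothesis is exactness of $A_1$, to obtain a short exact sequence of Fell bundles
\[ 0 \longrightarrow \mathcal A \otimes I \longrightarrow \mathcal A \otimes B \longrightarrow \mathcal A \otimes (B/I) \longrightarrow 0.\]
By \autoref{exactnessofuniversal}, the induced sequence of full cross-sectional $C^*$-algebras is exact. Since the approximation property passes to tensor products (\cite[Proposition 5.15]{BM-23}), each of $\mathcal A \otimes I$, $\mathcal A \otimes B$, $\mathcal A \otimes (B/I)$ has the approximation property, so by \cite[Theorem 6.2]{BM-23} the full and reduced cross-sectional algebras agree, giving the exact sequence
\[ 0 \longrightarrow C^*_r(\mathcal A \otimes I) \longrightarrow C^*_r(\mathcal A \otimes B) \longrightarrow C^*_r(\mathcal A \otimes (B/I)) \longrightarrow 0.\]
To conclude exactness of $C^*_r(\mathcal A)$, one then identifies $C^*_r(\mathcal A \otimes X) \cong C^*_r(\mathcal A) \otimes X$ naturally in $X \in \{I, B, B/I\}$, which converts the above into the desired exact sequence $0 \to C^*_r(\mathcal A) \otimes I \to C^*_r(\mathcal A) \otimes B \to C^*_r(\mathcal A) \otimes (B/I) \to 0$.

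The main obstacle I anticipate is this last identification $C^*_r(\mathcal A \otimes B) \cong C^*_r(\mathcal A) \otimes B$. The full-algebra version is tautological from the universal property, but for the reduced algebra one needs to compare the two left regular representations, which involves matching the weak conditional expectations and the $\ell^2$-completions for $\mathcal A$ and $\mathcal A \otimes B$. I would expect this to follow from the construction of the reduced norm and the fact that the weak conditional expectation $P$ of $\mathcal A \otimes B$ is the tensor product $P_{\mathcal A} \otimes \mathrm{id}_B$ of the weak conditional expectation of $\mathcal A$ with the identity on $B$; this is a standard but nontrivial check that likely needs to be done carefully on the level of the von Neumann enveloping bundles. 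Alternatively, if this identification is not readily available in the literature in the generality needed, the ``if'' direction could be established directly by showing that the composition of $\mathrm{id}_{C^*_r(\mathcal A)} \otimes \pi$ with the natural map $C^*_r(\mathcal A) \otimes B \to C^*_r(\mathcal A \otimes B)$ is an isomorphism using a density argument on $\C_{\mathrm{alg}}(\mathcal A) \odot B$.
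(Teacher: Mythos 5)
Your proposal is correct and follows essentially the same route as the paper: the embedding $A_1\subset C^*_r(\mathcal A)$ plus permanence of exactness for subalgebras in one direction, and in the other direction \autoref{shortexactfell}, \autoref{exactnessofuniversal}, and the passage from full to reduced cross-sectional algebras via the approximation property (\cite[Proposition 5.15, Theorem 6.2]{BM-23}) exactly as in the paper's proof. The one step you flag as the main obstacle, the identification $C^*_r(\mathcal A\otimes D)\cong C^*_r(\mathcal A)\otimes D$, is resolved in the paper by appealing to the proof of \cite[Proposition 5.9]{BM-23}, i.e.\ precisely the ``available in the literature'' route you anticipated.
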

\begin{proof}
	If $C^*_r(\mathcal A)$ is exact, then $A\subset C^*_r(\mathcal A)$ is exact since exactness passes to $C^*$-subalgebras (see \cite[IV.3.4.3]{BL-06}).
	
For the converse direction, assume that $A$ is exact and let
		\[0\longrightarrow  I\longrightarrow  B \longrightarrow  B/ I\longrightarrow 0\]
be a short exact sequence of $C^*$-algebras.
It suffices to show that the sequence
\begin{equation*}\label{eq-shortexacttensor}
	0\longrightarrow C^*_r(\mathcal A)\otimes I\longrightarrow C^*_r(\mathcal A)\otimes B\longrightarrow C^*_r(\mathcal A)\otimes B/I\longrightarrow 0
\end{equation*}
is exact.

For any $C^*$-algebra $D$, it follows from \cite[Proposition 5.15]{BM-23} that $\mathcal{A}\otimes D$ has the approximation property and from \cite[Theorem 6.2]{BM-23} that the left regular representation induces an isomorphism 
\begin{equation}\label{eq-left-regular}
	C^*(\mathcal A\otimes D)\cong C^*_r(\mathcal A\otimes D).
\end{equation}
Moreover, the proof of \cite[Proposition 5.9]{BM-23} shows that the canonical $*$-ho\-mo\-mor\-phism
\begin{equation}
	\phi_0\colon\mathbb{C}_{\rm{alg}}(\mathcal{A})\odot D\rightarrow \mathbb{C}_{\rm{alg}}(\mathcal{A}\otimes D)\subset C_r^*(\mathcal{A}\otimes D),\notag
\end{equation}
given by
\begin{equation}
	\phi_0\left(\left(\sum_{s\in S}a_s\delta_s+\mathcal{J}_{\mathcal{A}}\right)\otimes d\right)=\sum_{s\in S}(a_s\otimes d) \delta_s+\mathcal{J}_{\mathcal{A}\otimes D},\notag
\end{equation}
extends to the $*$-isomorphism 
\begin{equation}\label{eq-tensor-exchange}
	C_r^*(\mathcal{A})\otimes D\cong C_r^*(\mathcal{A}\otimes D)
\end{equation}
 for any $C^*$-algebra $D$. 
By applying the natural isomorphisms \eqref{eq-left-regular} and \eqref{eq-tensor-exchange} to $D\in \{I,B,B/I\}$, we obtain a commutative diagram 
\begin{equation*}
    \xymatrix{
      0 \ar[r] & C_r^*(\mathcal A)\otimes I \ar[r]  & C_r^*(\mathcal A)\otimes B \ar[r]  & C_r^*(\mathcal A)\otimes B/I \ar[r] & 0 \\
      0 \ar[r] & C^*(\mathcal A\otimes I)\ar[r]\ar[u]^\cong & C^*(\mathcal A\otimes B) \ar[r]\ar[u]^\cong & C^*(\mathcal A\otimes B/I) \ar[r] \ar[u]^\cong & 0.
    }
\end{equation*}
The lower row of this diagram is exact by the combination of \autoref{shortexactfell} and \autoref{exactnessofuniversal}. Thus, the upper row is exact as well. 
\end{proof}
We end this paper with an application to Fell bundles over \'etale grou\-poids.
We first recall some notation.
Let $G$ be a locally compact groupoid with Hausdorff unit space $G^{(0)}$. 
A subset $A\subset G$ is called a \emph{bisection} if the range and source maps $r,s\colon A\rightarrow G^{(0)}$ are homeomorphisms onto their images. 
Note that the set $\mathrm{Bis}(G)$ of all open bisections is an inverse semigroup with the unit $G^{(0)}$. 
We say that $G$ is \emph{\'etale} if the range map $r\colon G\rightarrow G^{(0)}$ is a local homeomorphism. 
We refer the reader to \cite[Definition 2.6]{BE-12} for the definition of Fell bundles over a groupoid and to \cite[Definition 4.10]{BM-23} for the definition of the approximation property for a Fell bundle over a groupoid. 

Given a saturated Fell bundle $\mathcal{A}=(A_g)_{g\in G}$ over $G$ and a \emph{wide} inverse semigroup $S\subset \operatorname{Bis}(G)$ (see \cite[Definition 4.11]{BM-23}, for instance $S= \operatorname{Bis}(G)$), there is an associated Fell bundle $\mathcal{B}=(B_u)_{u\in S}$ over $S$, where $B_u\coloneqq C_0(\mathcal{A}_u)$ is the space of continuous sections vanishing at infinity of the restriction $\mathcal{A}_u$ of $\mathcal{A}$ to $u$ (see \cite[Example 2.9]{BE-12} or \cite[Example 3.4]{Kwasniewski2023}). 
The unit fiber of $\mathcal B$ is given by the $C^*$-algebra $B\coloneqq C_0(\mathcal A^{(0)})$ of continuous sections vanishing at infinity of the upper semicontinuous $C^*$-bundle $\mathcal A^{(0)}\coloneqq \mathcal A|_{G^{(0)}}$ over $G^{(0)}$. 
Moreover, there are canonical isomorphisms $C^*(\mathcal{A})\cong C^*(\mathcal{B})$ and $C_r^*(\mathcal{A})\cong C_r^*(\mathcal{B})$ (see \cite{BEM-17} and \cite{BM-17}).

The following result is the exactness version of \cite[Corollary 6.9(2)]{BM-23} and generalizes the main results of \cite{Gao-25, Gao}. Here, we make neither the assumption that the groupoid is Hausdorff and second-countable, nor the assumption that the Fell bundle is separable.
\begin{cor}\label{fell-bundle-gpd-ext}
	Let $G$ be an \'etale groupoid with Hausdorff unit space. Let $\mathcal A$ be a saturated Fell bundle over $G$ with the approximation property. Then $C_r^*(\mathcal A)$ is exact if and only if $C_0(\mathcal A^{(0)})$ is exact.
\end{cor}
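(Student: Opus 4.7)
My plan is to reduce \autoref{fell-bundle-gpd-ext} to the inverse semigroup version \autoref{mainthm} by replacing the groupoid $G$ with a unital inverse semigroup that remembers its dynamics. Concretely, I would take $S = \Bis(G)$, which is unital with unit $G^{(0)}$, and form the associated Fell bundle $\mathcal B = (B_u)_{u\in S}$ over $S$ with $B_u = C_0(\mathcal A_u)$ as described in the discussion preceding the corollary. By construction, the unit fiber of $\mathcal B$ is $B_{G^{(0)}} = C_0(\mathcal A^{(0)})$, so the passage $\mathcal A \rightsquigarrow \mathcal B$ does not alter the $C^*$-algebra whose exactness appears on the right-hand side of the statement.

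The next step is to import the two translation results recalled just before the corollary. First, the canonical $*$-isomorphism $C_r^*(\mathcal A) \cong C_r^*(\mathcal B)$ from \cite{BEM-17,BM-17} identifies the reduced cross-sectional algebras, so that the left-hand side of the statement is also preserved by the passage to $\mathcal B$. Second, the approximation property for $\mathcal A$ as a Fell bundle over the groupoid $G$ (in the sense of \cite[Definition~4.10]{BM-23}) transfers to the approximation property for $\mathcal B$ as a Fell bundle over the inverse semigroup $S$ (in the sense of \cite[Definition~4.4]{BM-23}); this transfer is implicit in the proof of \cite[Corollary~6.9]{BM-23}, and is the only external ingredient that requires care, so I would simply quote it rather than reprove it.

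With these ingredients assembled, the proof is essentially a one-line application of \autoref{mainthm}: since $\mathcal B$ is a Fell bundle over the unital inverse semigroup $S$ with the approximation property, \autoref{mainthm} yields that $C_r^*(\mathcal B)$ is exact if and only if its unit fiber $C_0(\mathcal A^{(0)})$ is exact, and the isomorphism $C_r^*(\mathcal A) \cong C_r^*(\mathcal B)$ transports this equivalence to $\mathcal A$. I do not anticipate any serious obstacle; the only subtlety is to verify that $\Bis(G)$ is a valid choice of wide inverse semigroup in \cite[Definition~4.11]{BM-23} and that the unit $G^{(0)} \in \Bis(G)$ makes $S$ unital so that the hypothesis of \autoref{mainthm} is genuinely met.
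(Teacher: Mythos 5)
Your proposal is correct and follows essentially the same route as the paper: pass to the Fell bundle $\mathcal B$ over $S=\Bis(G)$, transfer the approximation property (the paper cites \cite[Theorem~4.16]{BM-23} for this, rather than the proof of Corollary~6.9), and apply \autoref{mainthm} together with $C_r^*(\mathcal A)\cong C_r^*(\mathcal B)$. The only cosmetic difference is that the paper handles the easy direction directly via the canonical embedding $C_0(\mathcal A^{(0)})\subset C_r^*(\mathcal A)$, which your use of the full equivalence in \autoref{mainthm} also covers.
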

\begin{proof}
If $C^*(\mathcal A)$ is exact, then $C_0(\mathcal A^{(0)})$ is exact since $C_0(\mathcal A^{(0)})$ embeds canonically into $C^*(\mathcal A)\cong C_r^*(\mathcal A)$.
Conversely, we assume that $C_0(\mathcal A^{(0)})$ is exact. Let $\mathcal{B}=(B_u)_{u\in S}$ be the Fell bundle over $S=\operatorname{Bis}(G)$ associated to $\mathcal{A}$. It follows from \cite[Theorem 4.16]{BM-23} that $\mathcal{B}$ has the approximation property. Since $C_0(\mathcal{A}^{(0)})=B_1$ is exact, then $C_r^*(\mathcal{A})\cong C_r^*(\mathcal{B})$ is exact by \autoref{mainthm}.
\end{proof}
\subsection*{Acknowledgements}
This work was supported by the Deutsche Forschungsgemeinschaft (DFG, German Research Foundation) under Germany's Excellence Strategy EXC 2044/2 - 3 90685587, Mathematics M\"unster: Dynamics--Geometry--Struc\-ture, and by the SFB 1442 of the DFG.
We appreciate helpful conversations with Alcides Buss, Bartosz Kwa\'sniewski, and Diego Mart\'inez. We gratefully acknowledge helpful comments by the referees.
\bibliography{references.bib}
\bibliographystyle{alphaurl}

\end{document}